\numberwithin{equation}{section}
\theoremstyle{plain}
\newtheorem{theorem}{Th\'eorème}[section]
\newtheorem{cor}[theorem]{Corollaire}
\newtheorem{prop}[theorem]{Proposition}
\newtheorem{lemma}[theorem]{Lemme}
\newtheorem{hyp}{Hypothèse}
\newenvironment{hypbis}[1]
  {\counterhypbis}
  {\endcounterhypbis}
\theoremstyle{definition}
\newtheorem{remark}[theorem]{Remarque}
\newtheorem{fact}[theorem]{Fait}
\newtheorem{definition}[theorem]{D\'efinition}
\newtheorem{notation}[theorem]{Notation}
\newcommand{\nc}{\newcommand}
\nc{\Z}{\mathbb{Z}}
\nc{\Q}{\mathbb{Q}}
\newcommand{\R}{\mathbb{R}}
\newcommand{\N}{\mathbb{N}}
\nc\LL{\mathcal L}
\nc\LLq{\LL^\mathrm{eq}}
\nc\Tq{T^\mathrm{eq}}
\nc{\Gal}{\operatorname{Gal}}
\nc{\dd}{\delta}
\nc{\sscl}[1]{\langle #1 \rangle}
\nc{\alg}[1]{#1^\textrm{alg}}
\nc{\restr}[1]{\!\!\upharpoonright_{#1}}
\newcommand{\dcl}{\operatorname{dcl}}
\newcommand{\acl}{\operatorname{acl}}
\newcommand{\pcl}{\operatorname{pcl}}
\newcommand{\tp}{\operatorname{tp}}
\newcommand{\qftp}{\operatorname{qftp}}
\nc\St{\operatorname{St}}
\nc\Stab{\operatorname{Stab}}
\nc\cb{\mathrm{Cb}}
\def\Ind#1#2{#1\setbox0=\hbox{$#1x$}\kern\wd0\hbox to
  0pt{\hss$#1\mid$\hss} \lower.9\ht0\hbox to
  0pt{\hss$#1\smile$\hss}\kern\wd0}
\def\Notind#1#2{#1\setbox0=\hbox{$#1x$}\kern\wd0\hbox to
  0pt{\mathchardef\nn="0236\hss$#1\nn$\kern1.4\wd0\hss}\hbox to
  0pt{\hss$#1\mid$\hss}\lower.9\ht0 \hbox to
  0pt{\hss$#1\smile$\hss}\kern\wd0}
\def\ind{\mathop{\mathpalette\Ind{}}}
 \def\indi#1{\mathop{\ \
    \hbox to 0pt{\hss$\mid^{\hbox to 0pt{$\scriptstyle#1$\hss}}$\hss}
    \lower4pt\hbox to 0pt{\hss$\smile$\hss}\ \ }}
\def\nindi#1{\mathop{\ \ \hbox to
    0pt{\hss$\!\not{\mid}^{\hbox to 0pt{$\scriptstyle\,#1$\hss}}$\hss}
    \lower4pt\hbox to 0pt{\hss$\smile$\hss}\ \ }}
\newcommand{\indstar}{\indi \star}
\begin{document}
\title{Un critère simple}
\date{\today}

\author{Thomas Blossier et Amador Martin-Pizarro}
\address{Universit\'e de Lyon; CNRS; Universit\'e Lyon 1; Institut
  Camille Jordan UMR5208, 43 boulevard du 11 novembre 1918, F--69622
  Villeurbanne Cedex, France.}  \address{Abteilung f\"ur Mathematische
  Logik; Mathematisches Institut; Albert-Ludwig-Universit\"at
  Freiburg; Eckerstra\ss e 1; D-79104 Freiburg; Germany.}
\email{blossier@math.univ-lyon1.fr}
\email{pizarro@math.uni-freiburg.de} \keywords{Model Theory, Simple
  Theories, Fields with operators, Pseudo-algebraically Closed Fields}
\subjclass{03C45, 12H05, 12H10} \thanks{Les auteurs ont conduit cette
  recherche gr\^ace au soutien du projet ValCoMo ANR-13-BS01-0006. Ils
  regrettent de ne pas encore avoir eu l'occasion d'utiliser le
  nouveau réfrigérateur mis à disposition au bâtiment Braconnier par
  le projet LABEX MILYON (ANR-10-LABX-0070) de l'Université de Lyon,
  dans le cadre du programme "Investissements d'Avenir"
  (ANR-11-IDEX-0007) de l'\'Etat Français, géré par l'Agence Nationale
  de la Recherche (ANR)}

\begin{abstract}
Nous isolons des propriétés valables dans certaines théories de
purs corps ou de corps munis d'opérateurs afin de montrer qu'une
théorie est simple lorsque les clôtures
définissables et algébriques sont contrôlées par une théorie stable
associée.
\end{abstract}

\maketitle
\thispagestyle{empty}

\section*{English Summary}

In this article, we mimic the proof of the simplicity of the
theory ACFA of generic difference fields in order to provide a
criterion, valid for
certain theories of pure fields and fields equipped with operators,
which shows that a complete theory is simple whenever its definable
and algebraic closures are controlled by an underlying stable theory.

\section*{Introduction}

Les corps différentiellement clos, les corps séparablement clos et les
corps aux différences génériques sont des exemples de corps munis
d'opérateurs qui jouent un rôle important en théorie des modèles et
ses applications à la géométrie diophantienne. Les corps aux
différences génériques, c'est-à-dire, les modèles existentiellement
clos dans la classe des corps munis d'un automorphisme, sont un cas
particulier d'un procédé général~: étant donnée une théorie stable
$T$, on considère la classe des modèles de

\[T_\sigma = T \cup \{\text{\og $\sigma$ est un automorphisme\fg}\}.\]

\noindent Quand la sous-classe des modèles existentiellement clos de
$T_\sigma$ est élémentaire, on dénote sa théorie par $TA$. Pour une
théorie stable, l'existence de $TA$ s'avère équivalente à une finitude
imaginaire uniforme, dite NFCP, et une condition technique donn\'ee
dans \cite{BS}, valable pour toute théorie fortement minimale avec
multiplicité définissable, ce qui est le cas des corps algébriquement
clos. Ainsi, la théorie ACFA des corps aux différences génériques
existe \cite{zCheHr} et est complète lorsque l'on détermine le type
d'isomorphisme de $(\acl_T(\emptyset), \sigma)$. Le corps fixe d'un
modèle de ACFA est un corps parfait pseudo-algébriquement clos avec
groupe de Galois $\hat{\Z}$. Ax \cite{jA68} montre qu'un tel corps est
un corps pseudo-fini, c'est-à-dire un corps infini modèle de la
théorie des corps finis.  Hrushovski \cite{eH12} étudie la classe des
corps parfaits pseudo-algébriquement clos avec groupe de Galois borné
et montre qu'ils satisfont une propriété d'amalgamation
généralisée. Il en déduit que la théorie est simple, une notion
introduite par Shelah comme généralisation de la
stabilité. L'amalgamation généralisée a été traitée par la suite de
façon homologique \cite{GKK13}.  \`A partir d'une configuration de
groupe dans une théorie simple, l'amalgamation généralisée permet
d'améliorer la construction dans \cite{BTW04} pour obtenir un groupe
\emph{hyperdéfinissable} \cite{KMP06}.

Cet article est une simple réécriture (nos excuses pour le mauvais jeu
de mots), sans nouvel apport, de la démonstration de la simplicité de
ACFA réalisée dans \cite{zCheHr}. Elle a pour seul but d'isoler des
propriétés, valables dans de nombreuses théories de corps munis
d'opérateurs (partie \ref{S:ex}), qui permettent de déduire dans la
partie \ref{S:simple} la simplicité lorsque les clôtures définissables
et algébriques sont contrôlées par une théorie stable. La description
de la déviation permet de donner une démonstration uniforme de
l'élimination des imaginaires (au dessus d'une sous-structure
élémentaire nommée) pour les théories de corps précédentes.

Dans la partie \ref{S:gps}, nous étudions les groupes définissables
dans ce contexte. Une propriété supplémentaire sur la clôture
définissable, satisfaite par certaines théories de corps, entraîne que
tout groupe type-définissable connexe se plonge à l’intérieur d’un
groupe algébrique. On retrouve alors le résultat connu dans le cas
différentiel ainsi qu'une version faible de \cite[Propositions 4.2 \&
4.9]{eBfD02} pour les corps séparablement clos.

Certaines théories de corps ne rentrent pas dans ce contexte,
entre-autres les paires propres de corps algébriquement clos ou les
corps séparablement clos (ou PAC bornés) de degré d’imperfection
infini. Dans ces exemples, la non-déviation est donnée en ajoutant une
condition à l’indépendance purement algébrique. En la partie
\ref{S:paires} en annexe, on montre que la démonstration du Théorème
de l’indépendance s’adapte facilement à ce cadre.

Les auteurs tiennent à remercier Zoé Chatzidakis pour ses suggestions
et remarques. Elle a joué un rôle fondamental pour la production de
cette note. Toute incohérence ou erreur sont uniquement de la
responsabilité des auteurs. Ils remercient également le rapporteur
anonyme pour ses remarques et sa lecture détaillée, qui nous ont
permis d'améliorer les premières versions de cet article.

\section{De simples théories}\label{S:ex}

\begin{definition}\label{R:KP}
Une théorie complète est \emph{simple} si dans un (tout) modèle
suffisamment saturé, il existe une relation
ternaire $\ind$ entre sous-ensembles satisfaisant les propriétés
suivantes~:\\
\begin{description}
 \item[\bf Invariance] Si $ABC \equiv A'B'C'$, alors $A\ind_C B$ si
et seulement si $A'\ind_{C'} B'$.\\
\item[\bf Symétrie] Si $A\ind_C B$, alors $B\ind_C A$.\\
\item[\bf Monotonie et Transitivité] $A\ind_C BD$ si et seulement si
$A\ind_C B$ et $A\ind_{CB} D$.\\
\item[\bf Caractère fini] $A\ind_C B$ si et seulement si $a\ind_C b $
pour tous deux sous-uples finis $a$ de $A$ et $b$ de $B$.\\
\item[\bf Caractère local] Pour tout uple fini $a$ et tout ensemble
$B$, il existe $C\subset B$ de taille bornée par $|T|$ avec $a\ind_C
B$. \\
\item[\bf Extension] Pour tous $A$, $C$ et $B$, il existe $A' \equiv_C
A$ avec $A'\ind_C B$. \\
\item[\bf Théorème de l'indépendance ($3$-amalgamation)] Si $M$ est
une sous-structure élémentaire
du modèle ambiant et $A\ind_M B$, étant
donnés
$$c\ind_M A \text { et } d\ind_M B \text{ tels que } c\equiv_M d,$$
\noindent alors il existe $e\ind_M AB$ avec $e\equiv_{MA} c$
et
$e\equiv_{MB} d$.
\end{description}
On dit que $A$ est \emph{indépendant} de $B$ sur $C$ si $A\ind_C B$.
Pour une théorie simple, la relation d'indépendance
ci-dessus est unique \cite{KP97} et coïncide avec la non-déviation.
\end{definition}

Une théorie simple est \emph{stable} si elle satisfait une version
plus forte du théorème de l'indépendance~:
\begin{description}
 \item[\bf Stationnarité] Tout type sur une sous-structure
élémentaire $M$ du modèle ambiant est \emph{stationnaire}~: étant
donnés
$$c\ind_M A \text { et } d\ind_M A \text{ tels que } c\equiv_M d,$$
\noindent alors $c\equiv_{MA} d$.
\end{description}

La théorie des corps algébriquement clos de caractéristique $p$ fixée,
avec $p$ premier ou nul, est stable~: deux sous-corps $A$ et $B$ sont
indépendants au-dessus d'un sous-corps commun $C$ si $A$ est
algébriquement indépendant de $B$ sur $C$, ou de façon équivalente, si
les corps $\alg{C(A)}$ et $\alg{C(B)}$ sont linéairement disjoints sur
$\alg C$. Le type de $A$ sur un corps parfait $C$ est stationnaire si
et seulement si l'extension de corps $C\subset C(A)$ est régulière. En
particulier, tout type sur un ensemble algébriquement clos de
paramètres est stationnaire.

L'anneau engendré par deux corps linéairement disjoints est isomorphe
à leur produit tensoriel. De façon générale, les types stationnaires
permettent de fusionner des applications élémentaires définies sur
des sous-parties indépendantes.

\begin{lemma}\label{L:Shelah}
Soient deux
sous-structures  $A$ et $B$ ayant une sous-structure  commune $C$
et des applications $C$-élémentaires $f:A\to A$ et
$g:B\to B$.  Si le type $\tp(A/C)$ est stationnaire et $\tp(A/B)$
  ne dévie pas sur $C$, alors
l'application $f\cup g$ est $C$-élémentaire.
\end{lemma}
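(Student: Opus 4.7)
Le but est d'établir $(A, B) \equiv_C (f(A), g(B))$, ce qui équivaut au caractère $C$-élémentaire de $f\cup g$. Le plan consiste à étendre d'abord $g$ en un $C$-automorphisme $\hat g$ du modèle ambiant (suffisamment saturé), ce qui est possible car $g$ est $C$-élémentaire. Comme $\hat g$ prolonge $g$, on a $\hat g(B)=g(B)$, et en appliquant $\hat g$ au couple $(A,B)$, j'obtiendrai $(\hat g(A), g(B)) \equiv_C (A,B)$.

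Il restera alors à montrer $(f(A), g(B)) \equiv_C (\hat g(A), g(B))$, soit $\tp(f(A)/Cg(B)) = \tp(\hat g(A)/Cg(B))$. Les deux uples $f(A)$ et $\hat g(A)$ réalisent $\tp(A/C)$~: le premier par $C$-élémentarité de $f$, le second car $\hat g$ fixe $C$ ponctuellement. L'hypothèse de stationnarité de $\tp(A/C)$ entraînera alors l'égalité voulue, pourvu que $f(A)$ et $\hat g(A)$ soient tous deux indépendants de $g(B)$ sur $C$.

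Pour $f(A)$, ceci résulte de la monotonie appliquée à $A \ind_C B$, puisque $f(A) \subseteq A$ et $g(B) \subseteq B$. Pour $\hat g(A)$, l'invariance de la non-déviation par le $C$-automorphisme $\hat g$ transfère $A \ind_C B$ en $\hat g(A) \ind_C \hat g(B) = \hat g(A) \ind_C g(B)$. La stationnarité donnera alors $\tp(f(A)/Cg(B)) = \tp(\hat g(A)/Cg(B))$, d'où $(f(A), g(B)) \equiv_C (\hat g(A), g(B)) \equiv_C (A, B)$. Le pas principal est l'invocation de la stationnarité pour identifier ces deux extensions non-déviantes d'un même type sur $C$~; le reste consiste en usages routiniers de la monotonie et de l'invariance par automorphisme.
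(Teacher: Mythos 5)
Votre démonstration est correcte et suit essentiellement la même stratégie que celle de l'article~: on étend $g$ en un automorphisme du monstre fixant $C$, on observe que $f(A)$ et $\hat g(A)$ réalisent tous deux $\tp(A/C)$ et sont indépendants de $g(B)$ sur $C$ (par monotonie pour l'un, par invariance pour l'autre), puis la stationnarité identifie les deux extensions non-déviantes. La seule différence, mineure, est que vous comparez les types au-dessus de $C\cup g(B)$ là où l'article le fait au-dessus de $B$ tout entier avant de recoller via un $B$-automorphisme.
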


\begin{proof}
On se place à l'intérieur d'un modèle monstre et on étend $g$ en un
automorphisme  $\tilde g$ qui fixe $C$. Alors $\tp(\tilde
g(A)/B)$ ne dévie pas sur $C$. Puisque $\tilde
g(A)\equiv_C A \equiv_C f(A)$, la stationnarité de $\tp(A/C)$ entraîne
 que $\tilde g(A)\equiv_B f(A)$. Il existe un $B$-automorphisme $h$
qui envoie $\tilde g(A)$ sur $f(A)$. La composition $h\circ \tilde g
$ étend  $f\cup g$, comme souhaité.
\end{proof}

\noindent Ce lemme permet facilement d'obtenir le résultat suivant
(cf. \cite[Fact 3.5]{dP07})~:
\begin{remark}\label{R:stat_dcl}
  Pour $x$, $y$ et $A$ dans un modèle d'une théorie stable, si le type
  $\tp(xy/A)$ est stationnaire et $y$ est définissable sur
  $x\cup\acl(A)$, alors $y$ l'est sur $x\cup A$.
\end{remark}

\begin{fact}\label{F:def}
  Dans une théorie stable $T$, si $p$ est un type stationnaire sur un
  sous-ensemble $A$ du modèle ambiant, alors son unique extension
  non-déviante $q$ à une sous-structure élémentaire $M$ est
  \emph{définissable} sur $A$~: étant donnée une formule
  $\varphi(x,y)$ à paramètres dans $A$, il existe une formule
  $\theta(y)$ à paramètres dans $A$ telle que pour tout uple $m$ dans
  $M$,
$$\varphi(x,m) \in q \text{ si et seulement si } M \models \theta(m).$$
(La formule $\theta(y)$ ne dépend que de $p$ et $\varphi$ à équivalence près.)



\noindent Enfin, toute extension non-déviante dans $T$ d'un type
défini sur une sous-structure élémentaire $M$ du modèle ambiant est
\emph{finiment satisfaisable} dans $M$~: si $a\ind_M b$, alors pour
toute formule $\varphi(x,y)$ telle que $\models\varphi(a,b)$, on a
$\models\varphi(m,b)$ pour un certain $m$ dans $M$.
\end{fact}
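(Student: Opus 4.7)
Je prouverai les deux assertions séparément. Pour la définissabilité, je passerai par une suite de Morley. Dans un modèle saturé $N \supseteq M$, je choisirai $(a_i)_{i<\omega}$ suite de Morley de l'unique extension non-déviante $q_N$ de $p$ à $N$, qui est $A$-indiscernable. Par stabilité, pour toute formule $\varphi(x,y)$ à paramètres dans $A$, il existera un entier $n=n_\varphi$ tel que, pour tout uple $b$, la suite $(\varphi(a_i,b))_{i<\omega}$ ne change de valeur qu'au plus $n$ fois (faute de quoi $\varphi$ coderait l'ordre). Ainsi, pour $b\in M$, on aura $\varphi(x,b)\in q$ si et seulement si $\varphi(a_i,b)$ vaut pour $i$ assez grand, ce qui équivaut au fait qu'au moins $n+1$ parmi $a_0,\dots,a_{2n+1}$ satisfassent $\varphi(a_i,b)$. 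Cela livrera une formule $\theta_0(y)$ à paramètres dans $A\cup\{a_0,\dots,a_{2n+1}\}$ définissant $q\restr\varphi$.

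Pour descendre les paramètres de $\theta_0$ de la suite de Morley jusqu'à $A$, j'invoquerai la stationnarité de $p$~: si $(a_i')$ est une autre suite de Morley de $p$, la formule $\theta_0'$ obtenue de la même manière définit le même sous-ensemble de $M$ que $\theta_0$. Le code canonique (dans $N^{\mathrm{eq}}$) de cet ensemble sera donc fixé par tout $A$-automorphisme et appartiendra par conséquent à $\dcl^{\mathrm{eq}}(A)$~; il fournira la formule $\theta(y)$ désirée, unique à équivalence près.

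Pour la finie satisfaisabilité, je montrerai d'abord qu'une formule $\varphi(x,b)$ finiment satisfaisable dans $M$ ne dévie pas sur $M$~: pour une suite $M$-indiscernable $(b_i)_{i<\omega}$ avec $b_0=b$, chaque $\varphi(x,b_i)$ reste finiment satisfaisable dans $M$, donc $\{\varphi(x,b_i)\}_{i<\omega}$ est cohérent. L'extension cohéritière de $\tp(a/M)$ à $Mb$ existe toujours par compacité et ne dévie pas. Puisque $M$ est un modèle, $\tp(a/M)$ est stationnaire et son extension non-déviante à $Mb$ est unique~; elle coïncide donc avec l'extension cohéritière, ce qui fournit la finie satisfaisabilité voulue. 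L'obstacle principal sera la descente des paramètres dans la première partie, où la stationnarité est indispensable~: sans cette hypothèse, on n'obtiendrait la définition que sur $\acl^{\mathrm{eq}}(A)$.
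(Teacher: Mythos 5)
L'article n'apporte aucune démonstration de ce fait~: il est énoncé comme un résultat classique de la théorie de la stabilité (définissabilité des types stationnaires, satisfaisabilité finie des extensions non-déviantes au-dessus d'un modèle), si bien qu'il n'y a pas de preuve \og de l'article \fg{} à laquelle comparer la vôtre. Votre stratégie est la stratégie standard~: vote majoritaire le long d'une suite de Morley pour la définissabilité, cohéritiers et stationnarité au-dessus d'un modèle pour la satisfaisabilité finie. La seconde partie est essentiellement correcte~: le point clé, que vous effleurez, est que le \emph{même} témoin $m\in M$ de $\varphi(m,b)$ satisfait $\varphi(m,b_i)$ pour tout $b_i\equiv_M b$ (la formule $\varphi(m,y)$ est à paramètres dans $M$), ce qui donne directement la cohérence de $\{\varphi(x,b_i)\}_{i<\omega}$ et la non-division~; le reste (existence du cohéritier, unicité de l'extension non-déviante sur un modèle) est standard.

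La descente des paramètres dans la première partie comporte en revanche une vraie lacune. D'abord, la seule borne $n$ sur les alternances le long de l'énumération donnée ne justifie pas, pour un uple $b$ arbitraire, l'équivalence entre \og valeur éventuellement vraie \fg{} et \og au moins $n+1$ vraies parmi $a_0,\dots,a_{2n+1}$ \fg{}~: avec une seule alternance, on peut avoir un long segment initial faux suivi d'une queue vraie. Il faut invoquer le fait qu'en théorie stable une suite de Morley est un \emph{ensemble} indiscernable, puis l'argument d'entrelacement, pour conclure que l'un des deux ensembles $\{i\,:\,\models\varphi(a_i,b)\}$ et $\{i\,:\,\models\neg\varphi(a_i,b)\}$ est de cardinal au plus $n$. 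Ensuite, et surtout, le fait que $\theta_0$ et $\theta_0'$ aient la même trace sur $M$ n'entraîne ni qu'elles soient équivalentes comme formules, ni que le paramètre canonique de l'ensemble défini par $\theta_0$ soit fixé par les $A$-automorphismes (lesquels ne fixent d'ailleurs pas $M$ en tant qu'ensemble)~: deux formules à paramètres hors de $M$ peuvent coïncider sur $M$ sans être équivalentes. L'argument correct consiste à établir, grâce au lemme de vote majoritaire pour $b$ \emph{arbitraire}, que $\theta_0$ définit la $\varphi$-définition de l'unique extension non-déviante globale de $p$~; cette extension étant $A$-invariante par stationnarité, l'ensemble définissable en question est $A$-invariant, son paramètre canonique appartient à $\dcl^{\mathrm{eq}}(A)$, donc est définissable sur un uple fini de $A$, ce qui fournit $\theta$ à paramètres dans $A$. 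Votre version, vérifiée seulement pour $b\in M$, ne permet pas d'amorcer cet argument d'invariance.
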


Une théorie du premier ordre élimine les imaginaires si, à l'intérieur
d'un modèle suffisamment saturé, chaque classe d'une relation
d'équivalence définissable sans paramètres admet l'analogue d'un corps
de définition, c'est-à-dire, un uple réel qui est fixé par tout
automorphisme si et seulement s'il fixe cette classe. Cet uple est
unique, à interdéfinissabilité près, et s'appelle le \emph{paramètre
  canonique} de la classe.  Toute théorie $T$ dans un langage $\LL$
admet une expansion $\Tq$ éliminant les imaginaires dans un langage
$\LLq$ à plusieurs sortes, une pour chaque quotient par une relation
d'équivalence définissable dans $T$ sans paramètres. La sorte
\emph{réelle}, obtenue en considérant le quotient par l'égalité,
s'identifie naturellement avec l'univers de la théorie $T$. Les
éléments des autres sortes sont dits \emph{imaginaires}. Les parties
finies peuvent être vues comme des éléments imaginaires.  La théorie
$T$ élimine les imaginaires si tout imaginaire est interdéfinissable
avec un uple réel. Elle code les ensembles finis si elle élimine les
imaginaires correspondant aux parties finies. La théorie $T$
\emph{élimine faiblement les imaginaires} si tout imaginaire $e$ est
définissable sur un uple réel $a$, qui est algébrique sur $e$. Enfin,
elle \emph{élimine géométriquement les imaginaires} si tout imaginaire
est interalgébrique avec un uple réel. Une théorie élimine les
imaginaires dès qu'elle les élimine faiblement et code les ensembles
finis.

Dans une théorie stable qui élimine les imaginaires, tout type $p$ sur
un ensemble algébriquement clos $A$ est stationnaire. De plus, il
existe un sous-ensemble $B\subset A$ définissablement clos et minimal
tel que $p$ ne dévie pas sur $B$ et $p\restr B$ est également
stationnaire. Un tel sous-ensemble est la \emph{base canonique}
$\cb(p)$ de $p$. Nous utiliserons la notation $\cb(c/D)$ pour dénoter
la base canonique du type $\tp(c/\acl(D))$.

Par la suite, fixons une théorie complète stable $T_0$ avec
élimination des quantificateurs et des imaginaires dans un langage
$\LL_0$.  L'indice $0$ fera référence à $T_0$. Ainsi, le symbole
$\indi 0$ dénote l'indépendance au sens de $T_0$, et de même pour
$\dcl_0$ ou $\acl_0$.

Dans une expansion $\LL$ du langage $\LL_0$, on considère une théorie
complète $T$ contenant $T_0^\forall$. Une $\LL_0$-structure satisfait
$T_0^\forall$ si et seulement si elle peut être plongée à l'intérieur
d'un modèle de $T_0$.

\noindent Toute $\LL$-structure est également une
$\LL_0$-structure. Ainsi, l'élimination des quantificateurs de $T_0$
entraîne que deux $0$-plongements quelconques d'un modèle $M$ de $T$
dans des modèles de $T_0$ préservent le $\LL_0$-types de toute
$\LL$-sous-structure $A$ de $M$. En particulier, au sens de $T_0$, les
clôtures définissables, respectivement algébriques, de $A$ dans chacun
des plongements sont isomorphes. On travaillera par la suite à
l'intérieur d'un modèle ambiant suffisamment saturé de $T_0$.

Nous allons lister maintenant des conditions sur les théories $T$ et
$T_0$ qui nous permettront de montrer la simplicité de $T$. Afin de
traiter également les corps séparablement clos, nous n'imposerons pas
que les structures considérées soient définissablement closes,
contrairement au cas des sous-structures géométriques \cite[Definition
2.6]{HrPi94} ou des sous-structures PAC bornées \cite[Section
2]{PP06}. Cependant, on suppose la propriété suivante~:
\begin{hyp}\label{H:dcl}
Pour tout modèle $F$ de $T$, tout élément dans $\dcl_0(F)$ est
$\LL_0$-inter\-défi\-nissable avec un uple de $F$.
\end{hyp}

Pour $T_0$ la théorie des corps algébriquement clos dans le langage
des anneaux, cette hypothèse est vérifiée si les modèles de $T$ sont
des corps, car $\dcl_0(A)$ est obtenu en ajoutant au sous-corps
engendré par $A$ les éléments purement inséparables, lorsque la
caractéristique est positive.

\begin{remark}\label{R:EIensemblefini}
  Puisque $T_0$ élimine les imaginaires, l'hypothèse $(\ref{H:dcl})$
  entraîne que toute partie $\LL_0$-définissable du modèle ambiant de
  $T_0$ qui est $F$-invariante est codée par un uple de $F$. En
  particulier, tout ensemble fini $F$-invariant est codé par un uple
  de $F$. Ainsi, la théorie $T$ code les ensembles finis.
\end{remark}

Dans un modèle suffisamment saturé $F$ de $T$, étant donnée une
sous-partie $A$ de $F$, on dénote par $\sscl A$, resp. $\dcl(A)$ ou
$\acl(A)$, la sous-structure de $F$ engendrée par $A$ au sens de $T$,
resp. la clôture définissable ou algébrique de $A$ dans $F$,
c'est-à-dire, les éléments de $F$ qui sont $\LL$-définissables ou
algébriques sur $A$.

Rappelons qu'un corps de caractéristique positive est parfait s'il
contient tous les éléments purement inséparables (à l'intérieur d'une
clôture algébrique fixée). De façon analogue, une sous-structure $A$
de $F$ est dite \emph{parfaite} si $\dcl_0(A)\cap F=A$.  Puisque la
structure $F$ est parfaite, étant donnée une sous-partie $A$, la plus
petite $\LL$-sous-structure parfaite la contenant existe, et est
appel\'ee sa \emph{clôture parfaite}, not\'ee $\pcl(A)$.

Puisque $\LL_0\subset \LL$, si $A\subset F$, alors
$\pcl(A) \subset \dcl(A)$ car $\dcl_0(A)\cap F \subset
\dcl(A)$. Notons que l'ensemble $\dcl_0(A)\cap F$ est
$\LL_0$-interdéfinissable avec $\dcl_0(A)$, par l'hypothèse
$(\ref{H:dcl})$. En revanche, même pour $A=\sscl A$, l'ensemble
$\dcl(A)$ peut être différent de $\dcl_0(A)\cap F$~: pour la plupart
des corps pseudo-finis avec $\LL_0 =\LL$ le langage des anneaux, la
clôture définissable coïncide avec la clôture algébrique \cite[Theorem
1.8]{oBzC16}.

\begin{remark}\label{R:dcl_inters}
 Si $A$ et $B$ sont deux sous-structures parfaites de $F$, alors
 $$ \dcl_0(A)\cap\dcl_0(B)=\dcl_0(A\cap B).$$
\end{remark}

\begin{proof}
  Si un élément $x$ appartient à l'intersection
  $ \dcl_0(A)\cap\dcl_0(B)$, alors l'hypothèse $(\ref{H:dcl})$
  entraîne qu'il existe un uple $a$ dans $A$ qui est
  $\LL_0$-interdéfinissable avec $x$, car $A$ est parfait. En
  particulier, l'uple $a$ appartient à $\dcl_0(B)\cap F=B$, puisque
  $B$ est parfait. On conclut que $x$ est dans $\dcl_0(A\cap B)$,
  comme souhaité.
\end{proof}

\begin{hyp}\label{H:acl}
  \begin{enumerate}
  \item[(i)] La clôture algébrique d'une sous-partie $A\subset F$
    coïncide avec la restriction à $F$ de la clôture algébrique au
    sens de $T_0$ de la sous-structure $\sscl A$ engendrée par $A$ au
    sens de la théorie $T$~:

$$\acl(A) = F \cap \acl_0(\sscl A).$$

\item[(ii)] Deux uples $a$ et $b$ de $F$ ont même type si et seulement
  s'il existe un $\LL$-isomorphisme de leurs clôtures algébriques qui
  envoie $a$ sur $b$.
\end{enumerate}
\end{hyp}

La première partie de l'hypothèse est toujours vérifiée pour une partie
 définissablement  close $A=\dcl(A)$, car $T_0$ code les
ensembles finis. Notons que toute sous-structure algébriquement close
de $F$ est parfaite.

\begin{remark}\label{R:stat}
Si $C=\acl(C)\subset F$, alors le type $\tp_0(A/C)$ d'une sous-partie
$A\subset \dcl_0(F)$ est stationnaire (cf. \cite[Lemma 2.8]{HrPi94}).
\end{remark}

\begin{proof}
Par élimination  des imaginaires de $T_0$, la base canonique
$\cb_0(A/C)$ est contenue dans $\dcl_0(A\cup C)\cap\acl_0(C) \subset
\dcl_0(F)\cap \acl_0(C)$. Par l'hypothèse $(\ref{H:dcl})$, la base
canonique est   $\LL_0$-interdéfinissable avec un uple (éventuellement
infini) de  $F\cap\acl_0(C)=C$.
\end{proof}

\begin{hyp}\label{H:qftp} Étant données deux sous-structures
  parfaites $A$ et $B$ de $F$ au-dessus d'une sous-structure commune
  algébriquement close $C$, si $A\indi 0_C B$, alors la sous-structure
  de $F$ engendrée par $A$ et $B$ est parfaite et coïncide avec la
  $\LL_0$-sous-structure engendrée par $A$ et $B$.

\noindent De plus, si $$A'\models \qftp(A/C) \text{ et } B'
\models \qftp(B/C)
 \text{ avec } A'\indi 0_C B',$$ alors
$\sscl{A'B'} \models \qftp(\sscl{AB}/C)$.
\end{hyp}

\begin{remark}\label{R:intersections}
  La première partie de cette hypothèse entraîne que
  $$\langle A,B\rangle=\langle A,B\rangle_0=\dcl_0(A,B)\cap
  F$$ \noindent et, par l'hypothèse
  $(\ref{H:acl})$, $$\acl(A,B)=\acl_0(A,B)\cap F.$$
\end{remark}

\begin{remark}\label{R:pred}
  Si $T_0$ est la théorie des corps algébriquement clos en
  caractéristique nulle, alors la théorie du corps ordonné $\R$
  satisfait les hypothèses $(\ref{H:dcl})$ et $(\ref{H:acl})$. Or,
  elle ne vérifie pas l'hypothèse $(\ref{H:qftp})$, puisque l'ordre de
  deux éléments distincts n'est pas déterminé par leurs
  relations purement algébriques.

  \noindent Le même argument montre que l'hypothèse $(\ref{H:qftp})$
  n'est pas valable pour la théorie d'un corps algébriquement clos
  muni d'un prédicat générique \cite{zChaP}.

\end{remark}

La condition suivante apparaît dans \cite[Definition 1.9]{eH02} et
joue un rôle fondamental dans la démonstration du théorème de
l'indépendance pour $T$.

\begin{hyp}\label{H:full}
Pour toute sous-structure élémentaire $N$ de $F$, on a

\[\acl_0(F) = \dcl_0(F\cup\acl_0(N)).\]
\end{hyp}

Cette condition est triviale lorsque les modèles de $T$ sont
algébriquement clos au sens de la th\'eorie $T_0$, ce qui est le cas
des corps algébriquement clos munis d'opérateurs \cite{MS14}. Si $K$
est un corps séparablement clos et $T_0$ la théorie des corps
algébriquement clos de même caractéristique, alors $\acl_0(K)$
coïncide avec $\dcl_0(K)$, qui est la clôture inséparable de $K$.

\noindent Pour une partie $A$ d'un modèle de $T_0$, nous notons pour
la suite
$$\Gal(A) = \mathrm{Aut_0}(\acl_0(A)/A),$$ le groupe des
$\LL_0$-automorphismes élémentaires de $\acl_0(A)$ fixant $A$. Lorsque
$T_0$ est la théorie d'un corps algébriquement clos, on retrouve le
groupe de Galois absolu du corps parfait $\dcl_0(A)$.

\noindent  Pour  $A \subset B$  deux parties du monstre de $T_0$ et
$\pi$  la
projection  naturelle de $\Gal(B)$ vers $\Gal(A)$, on a~:
\begin{itemize}
\item si $\tp_0(B/A)$ est stationnaire, alors $\pi$ est surjective
  (par le lemme \ref{L:Shelah})~;
\item $\acl_0(B) = \dcl_0(B\cup\acl_0(A))$ si et seulement si $\pi$
  est injective.
\end{itemize}
\begin{remark}\label{R:acl}
  Si $N$ est une sous-structure élémentaire de $F$ (ou plus
  généralement si $\acl_0(F) = \dcl_0(F\cup\acl_0(N))$ et
  $\acl_0(N)\cap F=N$), alors pour $A \supset N$ algébriquement clos
  dans $F$, on a également
\[\acl_0(A) = \dcl_0(A\cup\acl_0(N)),\]
et les projections de $\Gal(F)$ sur $\Gal(A)$ et de $\Gal(A)$ sur
$\Gal(N)$ sont des  isomorphismes.
\end{remark}
\begin{proof}
  Par la remarque \ref{R:stat} et l'hypothèse $(\ref{H:full})$, les
  applications restrictions de $\Gal(F)$ vers $\Gal(A)$ et de
  $\Gal(F)$ vers $\Gal(N)$ sont des isomorphismes. On en deduit qu'il
  en est de même pour l'application de $\Gal(A)$ vers $\Gal(N)$, donc
  $\acl_0(A) = \dcl_0(A\cup\acl_0(N))$.
\end{proof}

\begin{notation}
  Par la suite, si $N$ est une sous-structure de $F$, on posera
  $N\preceq_0 F$ pour indiquer que le $\LL_0$-réduit de $N$ est une
  sous-structure
  élémentaire du réduit  de $F$ correspondant. \\
  Lorsque $T_0$ est un réduit fortement minimal de $T$ et $N$ est une
  sous-structure algébriquement close et infinie de $F$, alors
  $N\preceq_0 F$.
\end{notation}

La démonstration de \cite[Lemma 3.18]{dP07}, simplifiée grâce à des
remarques de Zoé Chatzidakis, s'adapte facilement à notre contexte
pour donner le résultat suivant~:

\begin{lemma}\label{L:coher}
Soit $N$ une sous-structure algébriquement close contenant
une sous-structure  élémentaire de $F$ telle que $N\preceq_0 F$. Si
 $A$ et $ B$ sont deux sous-structures de $F$ contenant $N$ telles que
$$A \indi 0_N B,$$
alors le type $\tp_0(A/\acl_0(B))$ est finiment satisfaisable dans
$N$.  En particulier, le type $\tp_0(A/\acl_0(B))$ est finiment
  satisfaisable dans $\acl_0(N)$.
\end{lemma}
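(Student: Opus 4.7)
\medskip

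\noindent\textbf{Plan de démonstration.} L'idée est de ramener l'énoncé à une application directe du Fait~\ref{F:def}, selon lequel, dans la théorie stable $T_0$, toute extension non-déviante d'un type sur une sous-structure élémentaire est finiment satisfaisable dans celle-ci.

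La première étape consiste à observer que $N$ peut être traité comme une sous-structure $\LL_0$-élémentaire du monstre ambiant de $T_0$~: puisque $F\models T_0^\forall$, il se plonge dans un modèle de $T_0$, et l'élimination des quantificateurs de $T_0$ rend ce plongement $\LL_0$-élémentaire~; combiné à l'hypothèse $N\preceq_0 F$, cela montre que $N$ s'injecte aussi $\LL_0$-élémentairement dans le monstre, et joue donc bien le rôle de sous-structure élémentaire requis par le Fait~\ref{F:def}.

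Dans un second temps, je renforcerais l'indépendance $A\indi 0_N B$ en $A\indi 0_N \acl_0(B)$. En effet, tout uple de $\acl_0(B)$ étant $\acl_0$-algébrique sur $NB$, l'indépendance $A\indi 0_{NB} \acl_0(B)$ est gratuite~; la transitivité/monotonie de $\indi 0$, combinée à l'hypothèse, livre alors l'indépendance voulue. Autrement dit, $\tp_0(A/\acl_0(B))$ est l'unique extension non-déviante de $\tp_0(A/N)$ à $\acl_0(B)$.

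La conclusion suit directement du Fait~\ref{F:def}~: cette extension non-déviante au-dessus du modèle $N$ est finiment satisfaisable dans $N$, et donc a fortiori dans $\acl_0(N)\supseteq N$. Le seul écueil, plutôt modeste, est la toute première étape, à savoir vérifier que $N\preceq_0 F$ autorise réellement à considérer $N$ comme sous-structure élémentaire au sens de $T_0$~; une fois ceci acquis, le reste n'est qu'un calcul formel reposant sur la stabilité de $T_0$ et les propriétés standards de la non-déviation.
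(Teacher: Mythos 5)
Votre première étape contient une erreur qui invalide l'ensemble de l'argument. L'élimination des quantificateurs de $T_0$ n'implique nullement que le plongement de $F$ dans un modèle de $T_0$ soit $\LL_0$-élémentaire~: elle garantit seulement que ce plongement préserve les $\LL_0$-types calculés \emph{dans le modèle de $T_0$}, mais $F$ lui-même n'est pas un modèle de $T_0$ en général (pensez à un corps séparablement clos non parfait plongé dans sa clôture algébrique~: la formule $\exists y\,(y^p=x)$ est satisfaite par tout $x$ dans le grand modèle, pas dans $F$). Par conséquent $N$, qui n'est $\LL_0$-élémentaire que \emph{dans $F$}, n'est pas une sous-structure élémentaire du monstre de $T_0$, et la clause de satisfaisabilité finie du fait~\ref{F:def} ne s'applique pas telle quelle au-dessus de $N$. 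C'est précisément là que réside toute la difficulté du lemme~; votre « seul écueil, plutôt modeste » est en réalité le c\oe ur du problème.

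La démonstration du papier contourne cet obstacle autrement~: pour $\models\varphi(a,b)$ avec $a$ dans $A$, $b$ dans $B$ et $\varphi$ à paramètres dans $N$, le type $\tp_0(b/N)$ est stationnaire (remarque~\ref{R:stat}, car $N$ est algébriquement clos dans $F$), donc son extension non-déviante à un modèle de $T_0$ contenant $Na$ est \emph{définissable} sur $N$ par le fait~\ref{F:def}~; la $\varphi$-définition $\theta(x)$ peut être prise sans quantificateurs grâce à l'élimination des quantificateurs de $T_0$, de sorte que sa validité en $a$ descend à $F$, puis $N\preceq_0 F$ fournit un témoin $n$ dans $N$ avec $\models\varphi(n,b)$. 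Votre deuxième étape (le passage de $B$ à $\acl_0(B)$ en utilisant la stationnarité de $\tp_0(A/N)$) est correcte et correspond à la réduction effectuée dans le papier, mais elle ne répare pas le défaut initial.
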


\begin{proof}
  Le type $tp_0(A/N)$ est stationnaire par la remarque
  \ref{R:stat}. Il suffit ainsi de montrer que $\tp_0(A/B)$ est
  finiment satisfaisable dans $N$ : en effet, ce type finiment
  satisfaisable s'étendra en un type sur $\acl_0(B)$ finiment
  satisfaisable sur $N$. En particulier, ce dernier ne dévie pas sur
  $N$ et est par stationnarité égal au type $\tp_0(A/\acl_0(B))$, car
  $A \indi 0_N \acl_0(B)$.

  \noindent Vérifions donc que $\tp_0(A/B)$ est finiment satisfaisable
  dans $N$. Soit $\varphi(x,y)$ une $\LL_0$-formule à paramètres dans
  $N$ et des uples $a$ dans $A$ et $b$ dans $B$ avec
  $\models \varphi(a,b)$. On considère l'unique extension non-déviante
  $q(y)$ de $\tp_0(b/N)$ à une $\LL_0$-sous-structure élémentaire $M$ du
  modèle ambiant de $T_0$ contenant $Na$. Par le fait \ref{F:def}, la
  $\varphi$-définition $\theta(x)$ de $q$ est définissable sur $N$.  On a
  alors $M \models \theta(a)$ car $b\indi 0_N a$.


  \noindent Par élimination des quantificateurs de $T_0$, on peut
  supposer que $\theta$ est sans quantificateurs, et donc
  $F\models \theta(a)$.  Puisque $N\preceq_0 F$, il existe une
  réalisation $n$ de $\theta$ dans $N$. En particulier
  $\models\varphi(n,b)$, comme souhaité.

\end{proof}

\begin{prop}\label{P:coher}
  Soient $A_1,\ldots,A_n$ et $B$ des sous-structures algébriquement
  closes de $F$ indépendantes au sens de $T_0$ sur une sous-structure
  algébriquement close $N$ contenant une sous-structure élémentaire
  telle que $N\preceq_0 F$ ou telle que $\acl_0(N)$ est un modèle de
  $T_0$.  Alors
  $$\sscl{\acl(A_1B),\ldots,\acl(A_nB)} \cap \acl(A_1\ldots,A_n) =
  \sscl{A_1\ldots,A_n}. $$
\end{prop}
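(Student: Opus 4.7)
The inclusion $\sscl{A_1,\ldots,A_n}\subseteq\sscl{\acl(A_1B),\ldots,\acl(A_nB)}\cap\acl(A_1,\ldots,A_n)$ is immediate, and I focus on the converse: fix $c$ in the intersection and aim to show $c\in\sscl{A_1,\ldots,A_n}$. Since replacing each $A_i$ and $B$ by $\acl(A_i\cup N)$ and $\acl(B\cup N)$ preserves the $T_0$-independence and leaves the relevant algebraic closures unchanged, I may assume $N\subseteq A_i\cap B$ for all $i$.

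The plan is to translate the question into a purely $T_0$-theoretic one via Hypothesis $(\ref{H:qftp})$. Iterating this hypothesis over $N$ for the $A_i$'s, and over $B$ for the substructures $\acl(A_iB)$ (which are $T_0$-independent over $B$, since $A_1,\ldots,A_n,B$ are $T_0$-independent over $N\subseteq B$), both $\sscl{A_1,\ldots,A_n}$ and $\sscl{\acl(A_1B),\ldots,\acl(A_nB)}$ become parfait substructures coinciding with their respective $\LL_0$-substructures, that is, with $\dcl_0(\cdots)\cap F$. Combined with Remark $(\ref{R:intersections})$, which gives $\acl(A_1,\ldots,A_n)=\acl_0(A_1,\ldots,A_n)\cap F$, the problem reduces to showing that if $c\in F$ lies in $\dcl_0(\acl(A_1B),\ldots,\acl(A_nB))\cap\acl_0(A_1,\ldots,A_n)$, then $c\in\dcl_0(A_1,\ldots,A_n)$.

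The main step is finite satisfiability. I would express $c$ as the unique solution of an $\LL_0$-formula $\varphi(x;\bar y)$ evaluated at a tuple $\bar u=(\bar u_1,\ldots,\bar u_n)$ with $\bar u_i\in\acl(A_iB)\subseteq\acl_0(A_i,B)$, and isolate each $\bar u_i$ by an algebraic $\LL_0$-formula $\chi_i(\bar y_i;\bar a_i,\bar b_i)$ with $\bar a_i\in A_i$ and $\bar b_i\in B$. Packaging all this data into a single $\LL_0$-formula $\Psi(\bar z;\bar a,c)$---with variables $\bar z$ in the sort of $\bar b$, parameters $\bar a\in\bigcup_iA_i$, and the parameter $c\in\acl_0(A_1,\ldots,A_n)$---the tuple $\bar b\in B$ satisfies $\Psi$. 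By Lemma $(\ref{L:coher})$ the type $\tp_0(B/\acl_0(A_1,\ldots,A_n))$ is finitely satisfiable in $N$, so there is a tuple $\bar n\in N$ with $\Psi(\bar n;\bar a,c)$. The corresponding witnesses $\bar u^*_i$ then lie in $\acl_0(A_i,\bar n_i)=\acl_0(A_i)$ (since $\bar n_i\in N\subseteq A_i$), yielding $c\in\dcl_0(\acl_0(A_1),\ldots,\acl_0(A_n))$. Using Remark $(\ref{R:acl})$, which rewrites $\acl_0(A_i)=\dcl_0(A_i\cup\acl_0(N))$, this gives $c\in\dcl_0(A_1,\ldots,A_n,\acl_0(N))$.

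The main obstacle is the final descent from $\dcl_0(A_1,\ldots,A_n,\acl_0(N))$ down to $\dcl_0(A_1,\ldots,A_n)$, which I would resolve by a Galois argument exploiting $c\in F$. Write $c=h(\bar a,\bar m)$ with $\bar a\in\bigcup_iA_i$ and $\bar m\in\acl_0(N)$. By Remark $(\ref{R:acl})$, any $\tau\in\Gal(N)$ lifts to a unique $\tilde\tau\in\Gal(F)$ which fixes $F$ pointwise, hence fixes $c$; expanding $\tilde\tau(c)$ yields $h(\bar a,\tau\bar m)=c$. Thus $h(\bar a,\cdot)$ is constant on the finite $\Gal(N)$-orbit of $\bar m$, and the canonical code $e$ of that orbit belongs to $\dcl_0(N)\subseteq\bigcup_iA_i$ by elimination of imaginaries in $T_0$. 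Consequently $c$ is $\LL_0$-definable over $\bar a\cup\{e\}\subseteq\bigcup_iA_i$, giving $c\in\dcl_0(A_1,\ldots,A_n)$ as required. The alternative hypothesis ``$\acl_0(N)$ is a model of $T_0$'' is treated in the same way, with $\acl_0(N)$ playing the role of the $T_0$-elementary base.
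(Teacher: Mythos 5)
Your proof is correct and follows essentially the same route as the paper's: reduce everything to $T_0$-closures intersected with $F$ via Hypothesis $(\ref{H:qftp})$ and Remark \ref{R:intersections}, then use finite satisfiability of $\tp_0(B/\acl_0(A_1,\ldots,A_n))$ over the base (Lemma \ref{L:coher}) to replace the $B$-parameters witnessing the algebraicity of the $\zeta_i$'s, landing in $\dcl_0(\acl_0(A_1),\ldots,\acl_0(A_n))=\dcl_0(A_1,\ldots,A_n,\acl_0(N))$. The one place you genuinely diverge is the final descent to $\dcl_0(A_1,\ldots,A_n)$: the paper invokes stationarity of $\tp_0(A_1,\ldots,A_n,c/N)$ together with Remark \ref{R:stat_dcl}, whereas you reprove that remark in situ by lifting each $\tau\in\Gal(N)$ to $\Gal(F)$ via Remark \ref{R:acl} and coding the finite $\Gal(N)$-orbit of $\bar m$; both arguments ultimately rest on Hypothesis $(\ref{H:full})$, and yours is equally valid, just more hands-on. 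Two small points of hygiene: the code of the orbit lives a priori in $\dcl_0(N)$, which is not literally contained in $\bigcup_i A_i$ --- it is interdefinable with a tuple of $N$ by Hypothesis $(\ref{H:dcl})$ and perfection of $N$, which is what you need; and under the alternative hypothesis that $\acl_0(N)$ is a model of $T_0$, Lemma \ref{L:coher} is unavailable and one instead uses that nonforking extensions over a model are finitely satisfiable in it (Fact \ref{F:def}), so the witnesses land in $\acl_0(N)$ rather than $N$ --- your argument absorbs this since $\acl_0(N)\subseteq\acl_0(A_i)$, and your closing sentence gestures at it correctly.
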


La conclusion de la proposition est la condition $(\sharp)$ présente
dans \cite[Lemma 1.5]{KMP06}. Cette condition est fondamentale pour
avoir la $n$-amalgamation réelle au-dessus d'une sous-structure
élémentaire de $F$ (voir Définition \ref{D:nAM}).

\begin{proof}
  Puisque les structures $A_1,\ldots,A_n$ et $B$ sont indépendantes au
  sens de $T_0$ sur $N$, on a par la remarque \ref{R:intersections}
  que $\acl(A_iB) = \acl_0(A_iB) \cap F$ pour chaque $i\leq n$ et
  $\acl(A_1\ldots,A_n) = \acl_0(A_1\ldots,A_n) \cap F$. En outre,
$$\sscl{\acl(A_1B),\ldots,\acl(A_nB)}=
\dcl_0(\acl(A_1B),\ldots,\acl(A_nB))\cap F,$$
et
$$ \dcl_0(A_1,\ldots,A_n)\cap F=\sscl{A_1\ldots,A_n}.$$

\noindent Soit  $\alpha$ dans
$\dcl_0(\acl(A_1B),\ldots,\acl(A_nB)) \cap \acl(A_1\ldots,A_n)
$. Soit  $\psi(x,y_1,\ldots,y_n)$ une  $\LL_0$-formule  telle que
$\models \psi(\alpha,\zeta_1,\ldots,\zeta_n)$
pour des éléments $\zeta_i$ dans $\acl(A_iB)= \acl_0(A_iB)
\cap F$, et
$$ |\{x\,|\,\psi(x,y_1,\ldots,y_n)\}|\leq 1.$$

\noindent On trouve ainsi pour chaque $1\leq i \leq n$, une
$\LL_0$-formule $\psi_i(y_i,u),$ à paramètres dans $A_i$, chacune avec
un nombre fini de réalisations, uniforme en les paramètres, telles que
$$\models \bigwedge \psi_i(\zeta_i,b), $$
pour un certain uple $b$ dans $B$.

Comme $\alpha$ appartient à $ \acl_0(A_1,\ldots,A_n)$, il existe une
$\LL_0$-formule $\varphi(x)$ qui isole son type
$\tp_0(\alpha/\acl_0(A_1),\ldots,\acl_0(A_n))$.

\noindent Or,

$$ \models \exists x \exists y_1 \ldots \exists y_n
\left(\psi(x,y_1,\ldots,y_n) \wedge
\phi(x)\wedge \bigwedge\psi_i(y_i,b) \right).$$

\noindent Cette formule appartient donc au type
$\tp_0(b/\acl_0(A_1\ldots,A_n))$. Puisque
$$B\indi 0_N A_1,\ldots,A_n,$$

\noindent alors le type $\tp_0(b/\acl_0(A_1\ldots,A_n))$ est
finiment satisfaisable dans $\acl_0(N)$, soit par le lemme
précédent si $N\preceq_0 F$, soit par la stabilité de $T_0$ si
$\acl_0(N)$ est un modèle de $T_0$.  Ainsi, il existe $d$ dans
$\acl_0(N)$ tel que
$$\models \exists x \exists y_1 \ldots \exists y_n
\left(\psi(x,y_1,\ldots,y_n) \wedge \phi(x)\wedge
  \bigwedge\psi_i(y_i,d) \right).$$ On obtient alors une réalisation
de $\phi(x)$ dans $\dcl_0(\acl_0(A_1),\ldots,\acl_0(A_n))$.  Cette
réalisation doit être égale à $\alpha$ car $\phi$ isole son $0$-type
sur $\acl_0(A_1),\ldots,\acl_0(A_n)$.

\noindent Par l'hypothèse $(\ref{H:full})$, on en déduit que $\alpha$
appartient à $\dcl_0(A_1,\ldots,A_n,\acl_0(N))$. Puisque le type
$\tp_0(A_1,\ldots,A_n,\alpha/N)$ est stationnaire, la remarque
\ref{R:stat_dcl} entraîne que $\alpha$ appartient à
$$ \dcl_0(A_1\ldots,A_n)\cap
F=  \sscl{A_1\ldots,A_n}.$$
\end{proof}

L'hypothèse suivante, qui reflète une certaine richesse des modèles de
la théorie $T$, est satisfaite par les modèles existentiellement clos
des corps algébriquement clos munis d'opérateurs considérés dans
\cite{MS14}.

\begin{hyp}\label{H:ext}
  Soit $F'$ un autre modèle de $T$ contenant une sous-structure
  parfaite $C'$ au-dessus d'une sous-structure commune
  $A\subset F\cap F'$ parfaite dans $F$ et $F'$.

  \noindent Si $\tp_0(C'/A)$ est stationnaire, alors pour tout
  $A\subset B\subset F$, il existe une sous-structure parfaite $C$ de
  $F$ contenant $A$ qui est $\LL$-isomorphe à $C'$ au-dessus de $A$ et
  telle que $$C\indi 0_ A B.$$
\end{hyp}

Notons que si $\tp_0(C'/A)$ est stationnaire, alors
$C'\cap\acl_0(A) \subset \dcl_0(A)$. Si la théorie $T_0$ est la
théorie de purs corps algébriquement clos d'une certaine
caractéristique, et $A \subset C'$ sont des sous-corps parfaits, la
condition $C'\cap\acl_0(A) \subset \dcl_0(A)=A$
équivaut alors à ce que l'extension $C'$ sur $A$ soit régulière,
c'est-à-dire, à $C'$ et  $\acl_0(A)$ linéairement disjoints
sur $A$. De plus, cette condition entraîne  réciproquement que
le type $\tp_0(C'/A)$ est stationnaire, par
élimination des imaginaires dans $T_0$.

\begin{remark}\label{R:rel_acl}
  L'hypothèse (\ref{H:acl}) et le $\LL$-isomorphisme entre les
  structures $C'$ et $C$ ne suffisent pas pour garantir que $C$ et
  $C'$ ont le même type au sens de la théorie $T$. En revanche, si
  l'on se place au-dessus d'une sous-structure élémentaire commune
  $N$, alors cet $\LL$-isomorphisme envoie toute sous-structure
  $N \subset X' \subset C'$ qui est algébriquement close dans $F'$
  vers une sous-structure $X$ algébriquement close dans $F$~; en
  particulier $X$ et $X'$ ont alors le même type.
\end{remark}

\begin{proof}
 Par la remarque \ref{R:acl}, on a $\acl_0(X') = \dcl_0(X' \cup
\acl_0(N))$. Notons que le $\LL$-isomorphisme  entre $C'$ et $C$
induit un $\LL_0$-isomorphisme de $\acl_0(C')$ sur $\acl_0(C)$
au-dessus de $A$, puisque $\LL_0 \subset \LL$, donc on a également
$\acl_0(X) = \dcl_0(X \cup \acl_0(N))$. Soit $y \in \acl_0(X) \cap F$.
  On considère un uple $x$ dans $X$ tel que $y \in \dcl_0(x \cup
\acl_0(N))$. Par la remarque \ref{R:stat}, le type $\tp_0(xy/N)$ est
stationnaire. La remarque \ref{R:stat_dcl}  entraîne que $y$ est dans
$\dcl_0(x \cup N) \cap F\subset \dcl_0(X)\cap F$. Comme $C$ est
parfait dans $F$, et $X$ est définissablement clos au sens de
$T_0$ dans $C$, on  conclut que $y$ est dans $X$ comme souhaité.

\end{proof}

Même si nos hypothèses sont assez restrictives, plusieurs théories de
corps les satisfont.
\begin{fact}\label{F:exemples}
Les théories des~:
\begin{itemize}

  \item corps aux différences génériques ACFA en toute
caractéristique dans le langage des corps $\{+,-,\cdot,{}^{-1},0,1\}$
augmenté de   symboles de fonctions unaires pour l'automorphisme et
pour son inverse~;

   \item corps différentiellement clos, ou plus généralement corps
différentiellement clos avec $n$ dérivations qui commutent, en
caractéristique $0$~;

\item corps différence-différentiellement clos \cite{DCFA} en
    caractéristique $0$, dans le langage des corps différentiels aux
    différences~;

 \item corps munis d'opérateurs libres en caractéristique nulle
\cite{MS14}~;

\item corps séparablement clos en caractéristique positive de degré
  d'imperfection fini avec des constantes pour une $p$-base, munis des
  $\lambda$-fonctions associées~;

 \item corps aux différences génériques séparablement clos en
caractéristique positive de degré d'imperfection fini \cite{SCFA} avec
des constantes pour une $p$-base, munis des $\lambda$-fonctions
  associées et des symboles de fonctions unaires
  pour l'automorphisme et pour son inverse~;

\item corps pseudo-finis, ou plus généralement corps PAC
parfaits avec groupe de Galois borné \cite{eH02}, en
toute caractéristique dans le langage des corps augmenté de
constantes pour une sous-structure élémentaire~;

\end{itemize}
satisfont les hypothèses $(\ref{H:dcl})$, $(\ref{H:acl})$,
$(\ref{H:qftp})$, $(\ref{H:full})$  et $(\ref{H:ext})$, pour
 $T_0$ la théorie des purs corps algébriquement clos de même
caractéristique dans le langage de corps $\{+,-,\cdot,{}^{-1},0,1\}$.

\end{fact}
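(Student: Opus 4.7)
The plan is to go through each family in the list and verify the five hypotheses, exploiting the fact that the relevant structural statements are either already available in the cited references or reduce to classical facts about linearly disjoint amalgams of fields. The families split naturally in two groups: on one side, the theories whose models are algebraically closed in the pure field language (ACFA, DCF, DCFA, and fields with free operators in characteristic zero), and on the other, those whose models are only separably closed, or PAC bounded with constants for a named elementary substructure. This dichotomy is what controls Hypothesis (\ref{H:full}).

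For (\ref{H:dcl}), every element of $\dcl_0(F)$ lies in the perfect closure of the field reduct of $F$, and is therefore $\LL_0$-interdefinable with one of its iterated $p^n$-th powers, which lies in $F$; in characteristic zero there is nothing to check. For (\ref{H:acl}), I would rely on the usual explicit description of $\acl$ in each reference (\cite{zCheHr}, \cite{DCFA}, \cite{MS14}, \cite{SCFA}, \cite{eH02}), together with the determination of types by an $\LL$-isomorphism of algebraic closures, which is the customary way of phrasing completeness of each of these theories.

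Hypothesis (\ref{H:qftp}) is an amalgamation statement for linearly disjoint substructures. Given two perfect substructures $A$ and $B$ of $F$ linearly disjoint over a common algebraically closed $C$, the tensor product $A\otimes_C B$ is a domain and its fraction field carries a unique $\LL$-structure extending those of $A$ and $B$: each operator (an automorphism, a derivation, a $\lambda$-function, or a free operator in the sense of \cite{MS14}) is defined by universal polynomial identities that extend uniquely across localization and tensor product, and perfection of the resulting substructure is preserved by linear disjointness. For the PAC bounded case one appeals instead to the classical amalgamation of regular extensions over a common perfect base.

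For (\ref{H:full}), the algebraically closed case is immediate, and for separably closed models $\acl_0(F)=F^{1/p^\infty}=\dcl_0(F)$, so both sides again collapse to $\dcl_0(F)$. The content is in the PAC bounded case, where (\ref{H:full}) translates the fact that the absolute Galois action on $\acl_0(F)$ over $\dcl_0(F)$ is, up to conjugacy, already determined over the named elementary substructure; this is precisely the boundedness assumption combined with Hrushovski's analysis in \cite{eH02}. The step I expect to be the main obstacle is (\ref{H:ext}). For the existentially closed theories in the list, the plan is to first build an $\LL$-amalgam of $C'$ and $B$ inside an extension of $F$, using (\ref{H:qftp}) at the level of the ambient $T_0$-monster to guarantee that linear disjointness suffices to specify the amalgam, and then to apply existential closedness of $F$ to re-embed it. For PAC bounded fields the same strategy works, provided one matches the absolute Galois action, which is exactly what the constants for the named elementary substructure are there to encode.
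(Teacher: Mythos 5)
Your proposal is correct and takes essentially the same route as the paper: perfect (purely inseparable) closure for $(\ref{H:dcl})$, the existing literature for $(\ref{H:acl})$, canonical extension of the operators to the fraction field of the linearly disjoint tensor product for $(\ref{H:qftp})$ and $(\ref{H:ext})$ --- combined with existential closedness, separability of $F(C')/F$, or the PAC property to re-embed the amalgam --- and triviality, respectively boundedness of the Galois group, for $(\ref{H:full})$. The only imprecision is that in $(\ref{H:ext})$ the structure $C'$ lives in another model $F'$, so you cannot literally invoke $(\ref{H:qftp})$ there; as in the paper, one must redo the canonical extension of the operators to $\mathrm{Frac}(B\otimes_A C')$, using the regularity of $C'/A$ supplied by the stationarity of $\tp_0(C'/A)$.
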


\begin{proof}
Rappelons que l'hypothèse $(\ref{H:dcl})$ est satisfaite dès que $T$
est une théorie de corps infinis, pour $T_0$ la théorie d'un pur corps
algébriquement clos, puisque $\dcl_0(A)$ est obtenu en ajoutant les
éléments purement inséparables au sous-corps engendré par $A$.

L'hypothèse $(\ref{H:acl})$ a été démontrée lors de l'étude
modèle-théorique de chacune des théories considérées.

Pour l'hypothèse $(\ref{H:qftp})$, notons d'abord que le corps
engendré par deux sous-corps parfaits l'est également. Dans le cas des
corps algébriquement clos, les opérateurs définis ci-dessus sont
additifs, et leur valeur sur un produit s'obtient à partir des valeurs
sur chaque facteur. Pour les corps séparablement clos munis de
$\lambda$-fonctions, toute $\LL$-structure est $0$-relativement
définissablement close et les $\lambda$-fonctions vérifient également
les propriétés ci-dessus. Enfin, les corps PAC parfaits bornés
satisfont cette hypothèse puisque $\LL=\LL_0$.

 L'hypothèse $(\ref{H:full})$ est triviale pour
tous les exemples sauf pour les corps PAC parfaits bornés, où elle
suit de \cite[Remark 1.10]{eH02}. En effet, si les groupes de
Galois  sont bornés avec même quotients finis, tout épimorphisme
continu entre eux est injectif.

\noindent Il ne reste qu'à montrer l'hypothèse $(\ref{H:ext})$. Pour
les corps algébriquement clos munis d'opérateurs, les corps $A$ et
$C'$ sont alors parfaits. Ainsi, la stationnarité du type
$\tp_0(C'/A)$ entraîne que l'extension $A\subset C'$ est régulière.
Les opérateurs s'étendent de façon canonique au corps engendr\'e par
$B$ et $C'$, car ce dernier coïncide avec le corps de fractions du
produit tensoriel $B\otimes_A C'$ si l'on place $C'$ linéairement
disjoint de $B$ sur $A$,. Puisque $F$ est existentiellement clos et
suffisamment saturé, il contient une $\LL$-copie $C$ de $C'$ sur $A$
qui est linéairement disjointe de $B$ sur $A$, comme souhaité.

Pour les corps séparablement clos de degré d'imperfection fini avec
une $p$-base nommée, le même genre d'argument permet de montrer que
l'extension de corps $F(C')$ sur $F$ est séparable de même degré
d'imperfection, ce qui permet de trouver un $\LL$-isomorphisme $\psi$
entre $C'$ et une sous-structure $C$ de $F$ linéairement indépendante
de $B$ sur $A$. De façon analogue, on obtient le résultat pour les
corps aux différences génériques séparablement clos.

Pour les corps parfaits PAC avec groupe de Galois borné, les
sous-corps $A$ et $C'$  sont parfaits. On peut supposer que $C'$ est
linéairement disjoint de $F$ sur $A$.  L'extension $F(C')$ sur $F$ est
régulière, donc la propriété PAC et la saturation donnent un
$F$-homomorphisme d'algèbres $F[B(C')]\to F$. La restriction au corps
$B(C')$ est un $B$-monomorphisme et l'image $C$ de $C'$ vérifie la
conclusion.
\end{proof}

\begin{remark} Les sous-structures bornées pseudo-algébriquement
  closes des modèles d'une théorie stable, considérées par Pillay et
  Polkowska \cite{PP06}, satisfont toutes les hypothèses
  précédentes. En effet~: en passant à une expansion conservatrice du
  langage de façon à ce que toute sous-structure soit
  $0$-définissablement close, pour garantir l'hypothèse
  $(\ref{H:qftp})$.  L'hypothèse $(\ref{H:dcl})$ correspond à ce que
  la sous-structure PAC soit définissablement close, l'hypothèse
  $(\ref{H:acl})$ suit des \cite[Propositions 3.12 \& 3.14]{dP07},
  l'hypothèse $(\ref{H:full})$ correspond à la notion d'être bornée et
  l'hypothèse $(\ref{H:ext})$ suit de la condition PAC de façon
  analogue au cas des corps PAC.

  De façon analogue, les structures munies d'une $G$-action
  existentiellement closes de Hoffmann \cite{dH17} dans le langage
  $\LL_G$, avec des symboles $\sigma_g$ et leurs inverses, pour $g$
  dans $G$, satisfont toutes les hypothèses précédentes dès que la
  clôture algébrique \emph{split}, qui correspond à l'hypothèse
  $(\ref{H:full})$ : l'hypothèse $(\ref{H:dcl})$ est vérifiée car les
  modèles sont $0$-définissablement clos par \cite[Remark 3.20]{dH17},
  l'hypothèse $(\ref{H:qftp})$ est garantie dès que l'on considère une
  extension conservatrice du langage $\LL$ pour que toute
  sous-structure soit $0$-définissablement close, l'hypothèse
  $(\ref{H:acl})$ suit de \cite[Fact 4.8 \& Lemma 4.10]{dH17}. Enfin,
  l'hypothèse $(\ref{H:ext})$ suit de la démonstration du théorème de
  l'indépendance \cite[Lemma 4.20]{dH17}, lorsque le type est
  stationnaire (sans devoir supposer que l'ensemble de base est
  algébriquement clos).
\end{remark}

\section{Une preuve simple}\label{S:simple}

Nous disposons maintenant de tous les ingrédients nécessaires pour
montrer que la stabilité de $T_0$ se transfère partiellement à la
théorie $T$. On montrera que $T$ est simple.

\begin{theorem}\label{T:simple}
  Soit une théorie stable $T_0$ avec élimination des quantificateurs
  et des imaginaires dans un langage $\LL_0$. Dans une expansion
  $\LL \supset \LL_0$, on considère une théorie complète $T$ contenant
  $T_0^\forall$.  Si le couple $(T,T_0)$ satisfait les hypothèses
  $(\ref{H:dcl})$, $(\ref{H:acl})$, $(\ref{H:qftp})$, $(\ref{H:full})$
  et $(\ref{H:ext})$, alors la théorie $T$ est simple.

\noindent Si l'on augmente le langage $\LL$ par des constantes
pour une sous-structure élémentaire de $T$, on a la caractérisation
suivante~:  $$A\ind_C B \text{ si et
  seulement si }  \sscl{AC} \indi 0_{\sscl C} \sscl{BC}.$$
\end{theorem}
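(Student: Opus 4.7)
The plan is to take the binary relation
\[
A \ind_C B \;\Longleftrightarrow\; \sscl{AC} \indi 0_{\sscl{C}} \sscl{BC}
\]
as candidate for non-forking in $T$, and to verify the seven axioms of Définition \ref{R:KP}. By the uniqueness result of \cite{KP97}, this simultaneously establishes simplicity of $T$ and yields the claimed characterization. Six of the axioms follow in a routine way from the corresponding facts about $\indi 0$ (available because $T_0$ is stable) combined with the hypotheses on generated substructures; the real work lies in the Théorème de l'indépendance.

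For the routine axioms: Invariance comes from the fact that $\LL$-elementary maps preserve both $\sscl{\cdot}$ and the $\LL_0$-reduct, hence preserve $\indi 0$. Symétrie, Monotonie and Transitivité reduce to the corresponding properties of $\indi 0$ via $\sscl{A\sscl{BC}} = \sscl{ABC}$. Caractère fini uses that every finite tuple in $\sscl{AC}$ lies in $\sscl{a_0 c_0}$ for some finite $a_0 \subset A$, $c_0 \subset C$. Caractère local is inherited from $\indi 0$ with bound $|T_0| \leq |T|$. For Extension, I would pass to the perfect closure above an algebraically closed base to obtain $T_0$-stationarité via la remarque \ref{R:stat}, then apply l'hypothèse $(\ref{H:ext})$ to find an $\LL$-isomorphic copy that is $T_0$-independent from $\sscl{BC}$; l'hypothèse $(\ref{H:acl})$(ii) promotes this copy to a realization of $\tp(A/C)$.

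The Théorème de l'indépendance is the heart of the matter and follows the strategy of \cite{zCheHr}. Given $M \preceq F$, $A \ind_M B$, and $c \ind_M A$, $d \ind_M B$ with $c \equiv_M d$, I would first replace $A$, $B$, $Ac$, $Bd$ by their $\LL$-algebraic closures over $M$ — which are $T_0$-algebraically closed in $F$ by l'hypothèse $(\ref{H:acl})$(i) — so that la remarque \ref{R:acl} applies and gives $\acl_0(X) = \dcl_0(X \cup \acl_0(M))$ for each such $X$. Next, I would use l'hypothèse $(\ref{H:ext})$ to realize $\tp(c/M)=\tp(d/M)$ by some $e$ with $\sscl{Me} \indi 0_M \sscl{MAB}$. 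The $T_0$-stationarité provided by la remarque \ref{R:acl} then forces $\tp_0(e/\acl_0(MA)) = \tp_0(c/\acl_0(MA))$, and similarly on the $B$-side with $d$, yielding two partial $\LL_0$-isomorphisms. The crucial step is to glue them into a single $\LL$-isomorphism on $\acl(MABe)$: la proposition \ref{P:coher} provides the identity $\sscl{\acl(MAe),\acl(MBe)} \cap \acl(MAB) = \sscl{MAB}$ needed to make the two maps agree on their common image, l'hypothèse $(\ref{H:qftp})$ ensures the amalgam carries the correct $\LL$-quantifier-free type, and l'hypothèse $(\ref{H:acl})$(ii) together with la remarque \ref{R:rel_acl} lifts this to the full $\LL$-type, producing the required $e$.

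The main obstacle, as in \cite{zCheHr}, is precisely this amalgamation step: the stability of $T_0$ gives the $T_0$-amalgam for free, but the $\LL$-structure on the amalgamated side is not controlled by it a priori. La proposition \ref{P:coher} — itself resting on l'hypothèse $(\ref{H:full})$ — together with l'hypothèse $(\ref{H:qftp})$ are exactly the tools designed to rule out the possible conflict between the two partial $\LL$-isomorphisms on their common image; once they are in hand, the Théorème de l'indépendance follows by applying l'hypothèse $(\ref{H:acl})$(ii).
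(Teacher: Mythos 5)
Your handling of the six routine axioms (Invariance, Symétrie, Monotonie/Transitivité, Caractère fini, Caractère local, Extension) matches the paper's proof. The gap is in the Théorème de l'indépendance, and it is a genuine one. You begin by producing a fresh $e\models\tp(c/M)$ with $\sscl{Me}\indi 0_M\sscl{MAB}$ and then hope that $T_0$-stationarity plus a gluing of ``two partial $\LL_0$-isomorphisms'' will force $e\equiv_{MA}c$ and $e\equiv_{MB}d$. But an arbitrary realization of $\tp(c/M)$ that is merely $0$-independent from $AB$ will not in general realize $\tp(c/MA)$: by l'hypothèse $(\ref{H:acl})$(ii) that type is determined by the $\LL$-isomorphism type of the full algebraic closure $\acl(MAe)$ over $MA$, whereas the data you control --- $\tp_0(e/\acl_0(MA))$ via stationarity, and $\qftp(\sscl{MAe}/M)$ via l'hypothèse $(\ref{H:qftp})$ --- only pins down the generated substructure $\sscl{MAe}$. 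If that data determined the $\LL$-type over a model, every such type would be stationary and $T$ would be stable (cf.\ le corollaire \ref{C:stable}), which ACFA is not. Moreover the two maps you propose to glue cannot agree on their common domain: one sends $e$ to $c$ and the other sends $e$ to $d$, so there is nothing to glue.

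The paper constructs the witness rather than guessing it. After using Extension to make $D=\acl(Nd)$ independent from $A$ over $B$, it forms $E=\sscl{\acl(AB),\acl(BD)}$, applies la proposition \ref{P:coher} in the instance $\sscl{\acl(AB),\acl(BD)}\cap\acl(AD)=\sscl{AD}$ (not the instance you cite) to show that $\tp_0(E/\sscl{AD})$ is stationary, transports the partial $\LL$-isomorphism $\sscl{AD}\models\qftp(\sscl{AC}/N)$ to an $N$-isomorphism onto a second saturated model $F_1$, and then invokes l'hypothèse $(\ref{H:ext})$ --- whose stationarity requirement is exactly what la proposition \ref{P:coher} supplied --- to copy $E$ over $\sscl{AD}$ inside $F_1$, independently of $\acl_0(AD)\cap F_1$. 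La remarque \ref{R:rel_acl} guarantees that the copies of $\acl(AB)$ and $\acl(BD)$ remain algebraically closed, so that l'hypothèse $(\ref{H:acl})$(ii) yields $\tp_{F_1}(B_1/A)=\tp_F(B/A)$ and $\tp_{F_1}(B_1/D)=\tp_F(B/D)$; saturation of $F$ then pulls the configuration back, and the witness is the resulting copy of $D$. Your sketch names most of these tools but assigns them the wrong roles --- l'hypothèse $(\ref{H:ext})$ is used only to produce the initial $e$ (which is just Extension), and la proposition \ref{P:coher} for an identity whose function in the argument is never made precise --- so the amalgamation step, which is the whole content of the theorem, is not actually carried out.
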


\begin{proof}
  On fixe un modèle $F$ suffisamment saturée de $T$ et augmente le
  langage $\LL$ par des constantes pour une sous-structure élémentaire
  de $T$. On pose $A\ind_C B $ lorsque
  $\sscl{AC} \indi 0_{\sscl C} \sscl{BC}$ pour tous sous-ensembles
  $A$, $B$ et $C$. Nous allons vérifier que cette relation ternaire
  satisfait les propriétés de la définition \ref{R:KP}, ce qui
  entraîne la simplicité de $T$ avec les constantes nommées. Par
  \cite{eC99}, il suit que $T$ est également simple.

 Puisque $\LL$ contient $\LL_0$ et $T_0$ est stable
 avec élimination des quantificateurs, l'indépendance
ainsi définie satisfait clairement {\bf Invariance} et
{\bf Symétrie}. Le {\bf Caractère fini} est satisfait car
l'opérateur $\sscl{}$ est finitaire.

Notons que l'hypothèse $(\ref{H:acl})$ entraîne directement que
$A\ind_B \acl(B)$ et que $A\ind_C B $ si et seulement si
$\acl(AC)\ind_{\acl(C)} \acl(BC) $. On pourra donc considérer des
parties algébriquement closes pour étudier l'indépendance.

Pour {\bf Monotonie et Transitivité}, on considère des parties $A$,
$B$, $C$ et $D$ algébriquement closes de $F$ telles que $C \subset
A$ et  $C \subset B \subset D$.  Notons que si $A\ind_C B$ et
$A\ind_{B} D$, la sous-structure  $A$  est $0$-indépendante de $B$
sur $C$ et de  $D$ sur $B$. Elle est donc
$0$-indépendante de $D$ sur $C$, ce qui montre un sens de
l'implication.

\noindent Supposons maintenant que $$A \indi 0_C D.$$
\noindent Alors $A\indi 0_C B$, donc $$A\ind_C B.$$ Par
l'hypothèse $(\ref{H:qftp})$, la structure $\sscl{AB}$ est égale à la
$0$-structure engendrée par $A$ et $B$. Puisque $$A\indi
0_{B} D,$$ \noindent on conclut que $$\sscl{AB} \indi
0_{B} D,$$ donc $A\ind_{B} D$, comme souhaité.

Remarquons que pour un uple fini $a$ de $F$, la $\LL$-structure
$\sscl{a}$ a cardinalité bornée par $|T|$. De plus, si $B=\sscl B$ est
une sous-structure de $F$, alors par le caractère fini de la
$0$-indépendance, pour chaque uple fini $\xi$ de $\sscl{a}$, il existe
une sous-partie $C_\xi$ de $B$ de taille bornée par $|T_0|\leq |T|$
telle que $$\xi\indi 0_{C_\xi} B.$$ Alors si l'on pose $C_1$ la
$\LL$-sous-structure engendrée par la réunion de tous les
sous-ensembles $C_\xi$, lorsque $\xi$ parcourt $\sscl{a}$, on
a $$\sscl{a}\indi 0_{C_1} B.$$ On itère cette procédure avec
$\sscl{aC_1}$, qui a aussi cardinalité bornée par $ |T|$, pour
conclure que
$$\sscl{aC_\infty} \indi 0_{C_\infty} B,$$ où $C_\infty$ est la
$\LL$-sous-structure engendrée par la réunion de tous les $C_i$. On en
déduit le {\bf Caractère local}.

Voyons maintenant que l'indépendance satisfait {\bf
  Extension}. Considérons des parties algébriquement closes $A$, $B$
et $C$ de $F$ avec $C\subset A$. La remarque \ref{R:stat} donne que
$\tp_0(A/C)$ est stationnaire.  Par l'hypothèse $(\ref{H:ext})$, il
existe $A' \models \qftp(A/C)$ avec $A'\indi 0_C B$. Puisque $C$
contient une sous-structure élémentaire, la remarque \ref{R:rel_acl}
et l'hypothèse $(\ref{H:acl})$ entraînent que $A'\models \tp(A/C)$,
comme souhaité.

Il nous reste à montrer que l'indépendance satisfait le {\bf Théorème
de l'indépendance}. Soit  donc $N$ une sous-structure élémentaire de
$F$,  deux parties algébriquement closes $A$ et $B$ la contenant et
indépendantes sur  $N$, et deux parties algébriquement closes $C$ et
$D$ qui ont même type sur $N$ et vérifient
$$ C\ind_N A \text{\qquad et \qquad}  D\ind_N B.$$

\noindent Par {\bf Extension}, nous pouvons supposer que $D$ est
indépendante de $A$ sur $B$. \noindent Par transitivité, les
sous-structures algébriquement closes $A$, $B$ et $D$ sont
indépendantes  sur $N$. En particulier,

$$ \acl(AB) \indi 0_B \acl(BD).$$

\noindent Par l'hypothèse $(\ref{H:qftp})$, la sous-structure $E$
engendrée par $\acl(AB)$ et $\acl(BD)$ est parfaite et coïncide avec
la $\LL_0$-sous-structure engendrée par $\acl(AB)$ et $\acl(BD)$.

\noindent Vérifions maintenant que $\tp_0(E/\sscl{AD})$ est
stationnaire. Par l'hypothèse $(\ref{H:dcl})$, on peut supposer que sa
base canonique est contenue dans
$$E\cap\acl_0(AD)= E \cap \acl(AD) = \sscl{\acl(AB), \acl(BD)}  \cap
\acl(AD).$$

\noindent Par la proposition \ref{P:coher}, cette intersection est
égale à $\sscl{A D}$, ce qui donne la stationnarité du type
$\tp_0(E/\sscl{AD})$.

\noindent Puisque

$$ A \indi 0_N C \text{\qquad et
  \qquad} A \indi 0_N D,$$ l'hypothèse $(\ref{H:qftp})$ donne que
$\sscl{AD}\models \qftp(\sscl{AC}/N)$.  Cet $\LL$-isomorphisme partiel
induit un $N$-isomorphisme entre $F$ et un autre modèle suffisamment
saturé $F_1$ de $T$ tel que $\acl(AC)$ est $\LL$-isomorphe avec
$\acl_0(AD)\cap F_1$. En particulier, nous avons l'égalité
$\tp_{F_1}(D/A) = \tp_F(C/A)$.

\noindent
L'hypothèse $(\ref{H:ext})$ nous permet d'obtenir, à l'intérieur du
modèle suffisamment saturé $F_1$, une copie $E_1$ de $E$ sur
$\sscl{AD}$ qui est $0$-indépendante de $\acl_0(AD)\cap F_1$.
Par la remarque \ref{R:rel_acl}, les copies de $\acl(AB)$ et
$\acl(BD)$ dans $E_1$ sont
algébriquement closes dans $F_1$. Ainsi, la copie $B_1$ de $B$
correspondante  vérifie~:
$$ \tp_{F}(B/D) = \tp_{F_1}(B_1/D) \text{ et }  \tp_{F}(B/A) =
\tp_{F_1}(B_1/A). $$

\noindent Par saturation, il existe $B'D'$ dans $F$ tels que
$\tp_{F}(B'D'/A) = \tp_{F_1}(B_1 D/A)$.  Puisque
$\tp_F(B'/A) = \tp_{F_1}(B_1/A)= \tp_{F}(B/A)$, on peut supposer que
$B=B'$, ce qui permet de conclure.
\end{proof}

Les sous-structures PAC bornées d'une théorie superstable sont
supersimples \cite[Corollary 3.22]{dP07}.
Ce résultat est vérifié dès que les deux langages coïncident~:

\begin{cor}\label{C:ss}
  Quand $\LL=\LL_0$ (ou plus généralement, quand $\LL$ ne contient pas
  de nouveaux symboles de fonctions), si $T_0$ est superstable, alors
  $T$ est supersimple.

\end{cor}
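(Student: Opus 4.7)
By Theorem~\ref{T:simple}, after naming an elementary substructure $N$ by constants, $T$ is simple with the independence relation $A \ind_C B \iff \sscl{AC} \indi 0_{\sscl{C}} \sscl{BC}$; it suffices to prove supersimplicity in this expansion, as the result transfers back to $T$ by the same appeal to~\cite{eC99} as in the proof of Theorem~\ref{T:simple}. The plan is therefore to verify the strengthened local character in the expanded theory: for every finite tuple $a$ and every set $B \subset F$, to exhibit a \emph{finite} $C \subset B$ with $a \ind_C B$.

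The key observation is that when $\LL$ contains no new function symbols, the $\LL$-substructure $\sscl{X}$ is generated from $X$ by $\LL_0$-operations and the interpretations of the $\LL$-constants, all of which lie in $N$. Hence $\sscl{X} \subset \dcl_0(X \cup N)$ for every $X$; in particular, for a finite tuple $a$, the possibly infinite set $\sscl{a}$ still lies in $\dcl_0(aN)$.

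Given a finite tuple $a$ and a set $B \subset F$, the plan is to apply superstability of $T_0$ inside its ambient monster to $a$ and the set $B \cup N$, obtaining a finite $C_0 \subset B \cup N$ with $a \indi 0_{C_0} (B \cup N)$. Setting $C = C_0 \cap B$, which is a finite subset of $B$, the transitivity axiom of $\indi 0$ combined with $C_0 \subset C \cup N \subset B \cup N$ yields $a \indi 0_{C \cup N} (B \cup N)$. Plugging in $\sscl{Y} \subset \dcl_0(Y \cup N)$ and the invariance of $\indi 0$ under $\dcl_0$-closure of the base then translates this into $\sscl{aC} \indi 0_{\sscl{C}} \sscl{BC}$, that is, $a \ind_C B$. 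The only step requiring care is this final translation between $\indi 0$ at the level of tuples and at the level of $\LL$-substructures; it is entirely formal once one notes that $\dcl_0(\sscl{C} \cup N) = \dcl_0(CN)$.
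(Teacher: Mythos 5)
Your argument is correct and is exactly the (implicit) proof behind this corollary: since $\LL$ adds no function symbols, $\sscl{X}\subset\dcl_0(X\cup N)$, so the iteration used for local character in Theorem~\ref{T:simple} collapses, and the finite base furnished by superstability of $T_0$ transfers to the $\LL$-substructures via preservation of $\indi 0$ under $\dcl_0$ of the sides and under replacement of the base $C\cup N$ by the interdefinable set $\sscl{C}$. No gaps.
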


\begin{definition}\label{D:nAM}
  Étant donné un entier $n\geq 2$, un \emph{problème complet de
    $n$-amalgamation réelle sur une sous-structure} $E$ est la donnée
  d'une famille des types $p_w(x_w)$ sur $E$ à variables réelles,
  indexée par les sous-parties propres de $\{1,\ldots,n\}$, telle
  que~:
\begin{itemize}
\item si $w\subset w'$, alors $p_{w'}(x_{w'})\upharpoonright x_w
\supset p_w$~;
\item si $a_w\models p_w$, alors
\begin{itemize}
\item $a_w\subset \acl(E,\{a_i\}_{i\in
w})$, où $a_i$ est $a_w \upharpoonright \{i\}$~;
\item l'ensemble $\{a_i\}_{i\in w}$ est $E$-indépendant.
\end{itemize}
\end{itemize}

\noindent Une théorie simple a la propriété de $n$\emph{-amalgamation
  réelle} si tout problème complet de $n$-amalgamation réelle admet un
type $p_{\{1,\ldots,n\} }$ complétant la famille et tel que les
propriétés ci-dessus sont conservées.
\end{definition}
Notons que toute théorie simple a la propriété de $3$-amalgamation
réelle sur les sous-structures élémentaires, car celle-ci est une
reformulation du théorème de l'indépendance.

En utilisant la proposition \ref{P:coher}, la démonstration ci-dessus
s'adapte facilement pour montrer le résultat suivant~:

\begin{cor}\label{C:amalg}
  Pour tout entier $n$, la théorie $T$ a la propriété de
  $n$-amalgamation réelle sur toute sous-structure élémentaire.
\end{cor}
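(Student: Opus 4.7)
On procéderait par récurrence sur $n\geq 3$. Le cas de base $n=3$ est précisément le théorème de l'indépendance, déjà établi à la fin de la démonstration du théorème \ref{T:simple}.

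Pour le pas de récurrence, supposons la $k$-amalgamation réelle pour tout $3\leq k< n$ au-dessus de toute sous-structure élémentaire. Étant donné un problème complet de $n$-amalgamation réelle $(p_w)_{w\subsetneq\{1,\ldots,n\}}$ sur $N$, on fixerait d'abord des réalisations $a_1,\ldots,a_n$ des types à un élément, indépendantes sur $N$ au sens de $T$. Grâce à la $|w|$-amalgamation (triviale pour $|w|\leq 2$, assurée par l'hypothèse de récurrence pour $3\leq |w|\leq n-1$), on construirait par récurrence sur $|w|$ une famille cohérente de réalisations $(a_w)_{w\subsetneq\{1,\ldots,n\}}$ des types $p_w$. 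Le pas délicat consisterait à s'assurer que les réalisations des deux faces maximales $u=\{2,\ldots,n\}$ et $v=\{1,3,\ldots,n\}$ coïncident sur la restriction commune indexée par $w_0 = u\cap v=\{3,\ldots,n\}$.

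À cet effet, on réaliserait d'abord $a_{w_0}$ par $(n-2)$-amalgamation, puis on construirait séparément $a_u\models p_u$ et $a_v\models p_v$ prolongeant $a_{w_0}$, en utilisant l'extension pour garantir $a_u\indi 0_{\acl(N,a_{w_0})} a_v$. Comme $a_u\subset\acl(N,a_2,a_{w_0})$ et $a_v\subset\acl(N,a_1,a_{w_0})$, on appliquerait alors le théorème de l'indépendance au-dessus de $\acl(N,a_{w_0})$, en imitant la dernière partie de la démonstration du théorème \ref{T:simple}~: la proposition \ref{P:coher}, appliquée aux ensembles adéquats, garantit que la base canonique au sens de $T_0$ du $0$-type de $\sscl{\acl(N,a_u),\acl(N,a_v)}$ sur $\sscl{a_1,\ldots,a_n}$ est contenue dans $\sscl{a_1,\ldots,a_n}$. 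Cette stationnarité permet alors d'amalgamer $a_u$ et $a_v$ en un système cohérent de réalisations.

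On poserait enfin $a_{\{1,\ldots,n\}}$ comme la concaténation de tous les $a_w$ construits, et $p_{\{1,\ldots,n\}}:=\tp(a_{\{1,\ldots,n\}}/N)$. Par construction, $p_{\{1,\ldots,n\}}\restr{x_w}=p_w$ pour tout $w\subsetneq\{1,\ldots,n\}$, les $a_i$ sont $N$-indépendants, et $a_{\{1,\ldots,n\}}\subset\acl(N,a_1,\ldots,a_n)$, puisque chaque $a_w\subset\acl(N,\{a_i\}_{i\in w})$ par hypothèse de récurrence. L'obstacle principal reste la cohérence des réalisations sur les deux faces maximales~; comme dans \cite{KMP06}, elle s'obtient en combinant le théorème de l'indépendance à la condition $(\sharp)$ donnée par la proposition \ref{P:coher}.
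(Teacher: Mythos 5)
Votre stratégie générale --- récurrence sur $n$ en imitant la fin de la démonstration du théorème \ref{T:simple} et en invoquant la proposition \ref{P:coher} --- est bien celle que le texte suggère, mais l'exécution comporte une lacune réelle. Le point névralgique est que vous ramenez la cohérence des faces maximales à une application du \textbf{Théorème de l'indépendance} \emph{au-dessus de} $\acl(N,a_{w_0})$, qui n'est pas une sous-structure élémentaire. Or ce théorème n'est établi que sur les sous-structures élémentaires~; son extension aux ensembles algébriquement clos contenant un modèle n'est garantie que lorsque $T_0$ est fortement minimale (remarque \ref{R:Thm_Indep}), précisément parce que la proposition \ref{P:coher} exige pour sa base $N\preceq_0 F$ ou $\acl_0(N)\models T_0$, conditions que $\acl(N,a_{w_0})$ ne vérifie pas en général. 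La démonstration visée ne change pas de base~: elle reproduit le collage de la preuve du théorème \ref{T:simple} directement au-dessus du modèle $N$, en appliquant la proposition \ref{P:coher} aux $n-1$ sommets $A_i=\acl(N,a_i)$, indépendants sur $N$, et à $B=\acl(N,a_n)$ --- c'est la raison pour laquelle la proposition est énoncée pour $n$ structures et non pour deux seulement~: c'est elle qui fournit, sur $N$, la stationnarité du $0$-type de la structure engendrée par les faces contenant le dernier sommet au-dessus de $\sscl{A_1,\ldots,A_{n-1}}$.

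Deux points secondaires affaiblissent aussi la construction. D'abord, fixer au départ des réalisations $a_1,\ldots,a_n$ indépendantes des seuls types de sommets ne garantit pas que les types de faces prescrits soient réalisables au-dessus~: dans une théorie simple, deux réalisations $N$-indépendantes de $p_i$ et $p_j$ n'ont pas toutes le même type joint, et la $k$-amalgamation fournit l'existence d'\emph{un} type complétant, non la réalisation d'un type \emph{prescrit} au-dessus d'une réalisation donnée du bord~; il faut commencer par réaliser $p_{\{1,\ldots,n-1\}}$, ce qui fixe correctement toutes les faces incluses dans $\{1,\ldots,n-1\}$. Ensuite, il y a $n$ faces maximales à faire coïncider, et les recoller deux à deux ne règle pas la contrainte simultanée sur les intersections multiples~: le système des faces contenant le sommet $n$ doit être traité d'un seul coup, ce que permet justement la version à $n$ structures de la proposition \ref{P:coher}.
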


\begin{remark}\label{R:Thm_Indep}
  Dans le théorème \ref{T:simple}, si la théorie $T_0$ est fortement
  minimale, alors le {\bf Théorème de l'indépendance} est vérifié
  au-dessus de tout ensemble algébriquement clos contenant une
  sous-structure élémentaire, car la proposition \ref{P:coher} est
  alors valable sur un tel ensemble. En particulier, toutes les
  théories de corps du fait \ref{F:exemples} satisfont le {\bf
    Théorème de l'indépendance} sur toute $\LL$-sous-structure
  algébriquement close contenant une sous-structure élémentaire. En
  revanche, on ignore si c'est le cas pour les sous-structures bornées
  pseudo-algébriquement closes d'une théorie stable quelconque
  \cite{PP06}.

\end{remark}

Une traduction directe de \cite[Proposition 4.7]{eH12} donne
l'élimination  des imaginaires de $T$ au-dessus d'une
sous-structure élémentaire nommée. On retrouve ainsi une
  démonstration  uniforme de  l'élimination des imaginaires des
  théories du fait  \ref{F:exemples}.

\begin{cor}\label{P:WEI}
  Au-dessus de constantes pour une sous-structure élémentaire, la
  théorie $T$ élimine géométriquement les imaginaires. De plus, elle
  élimine les imaginaires dès que le théorème d'indépendance est vrai
  sur toute sous-structure algébriquement close.
\end{cor}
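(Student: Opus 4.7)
Le plan consiste à transposer à notre cadre l'argument de \cite[Proposition 4.7]{eH12}. Deux ingrédients sont à disposition~: par la remarque \ref{R:EIensemblefini}, la théorie $T$ code les ensembles finis~; et par le théorème \ref{T:simple}, $T$ est simple avec le \textbf{Théorème de l'indépendance} au-dessus de la sous-structure élémentaire nommée $M$. Puisque le codage des ensembles finis transforme l'élimination faible en élimination des imaginaires, et que l'élimination géométrique revient, modulo ce codage, à l'interalgébricité de tout imaginaire avec un uple réel, les deux assertions se ramènent respectivement à~: (i) tout imaginaire $e$ au-dessus de $M$ est interalgébrique sur $M$ avec un uple réel~; (ii) sous la 3-amalgamation sur toute sous-structure algébriquement close, tout imaginaire est définissable au-dessus d'un uple réel lui-même algébrique sur lui.

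Pour (i), soit $e$ un imaginaire et posons $A=\acl^{\mathrm{eq}}(Me)\cap F$, la partie réelle maximale de $\acl^{\mathrm{eq}}(Me)$. On vise $e\in\acl^{\mathrm{eq}}(MA)$. Supposons le contraire~: par \textbf{Extension}, on trouve un conjugué $e'$ de $e$ au-dessus de $MA$ avec $e\ind_{MA} e'$ et $e'\neq e$. Choisissons des uples réels $c,c'$ avec $e\in\dcl^{\mathrm{eq}}(Mc)$ et $e'\in\dcl^{\mathrm{eq}}(Mc')$, réalisant un même type sur $MA$ et indépendants sur $MA$. Le \textbf{Théorème de l'indépendance} au-dessus de $M$, appliqué à l'amalgame de $\tp(c/Me)$ et $\tp(c'/Me)$ compatible avec $c\ind_M c'$, produit un type unique qui force l'image par la fonction $M$-définissable depuis $c$ à coïncider avec celle depuis $c'$, contredisant $e\neq e'$.

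Pour (ii), le même argument se déroule en remplaçant $M$ par la sous-structure algébriquement close réelle $\acl(MA)$ dans le \textbf{Théorème de l'indépendance} renforcé~; on en déduit $e\in\dcl^{\mathrm{eq}}(MA)$, soit l'élimination faible des imaginaires, qui combinée au codage des ensembles finis livre l'élimination des imaginaires. L'obstacle principal réside dans l'orchestration précise de l'amalgamation~: il s'agit de faire apparaître à l'issue de l'amalgame un même imaginaire associé à la fois à $c$ et à $c'$, ce qui utilise de manière essentielle que $e$ est définissable (et non seulement algébrique) sur $Mc$, ainsi que la maximalité de la partie réelle $A$ dans $\acl^{\mathrm{eq}}(Me)$.
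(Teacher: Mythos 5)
Votre réduction des deux énoncés via le codage des ensembles finis (remarque \ref{R:EIensemblefini}) est correcte, et votre point (ii) est proche, dans l'esprit, de la seconde moitié de la démonstration de l'article. En revanche, votre point (i) comporte une lacune essentielle. L'élimination géométrique des imaginaires n'est pas une conséquence de la simplicité~: le \textbf{Théorème de l'indépendance} au-dessus de la sous-structure élémentaire nommée $M$ vaut dans toute théorie simple, alors que l'élimination géométrique échoue déjà pour certaines théories stables (une relation d'équivalence à une infinité de classes infinies). Un argument n'invoquant que la $3$-amalgamation sur $M$ ne peut donc pas aboutir. Concrètement, l'application que vous décrivez ne rentre pas dans le cadre du théorème~: amalgamer $\tp(c/Me)$ et $\tp(c'/Me)$ exigerait deux côtés indépendants sur $M$, alors que $Me$ n'en fournit qu'un seul~; et la variante naturelle, amalgamer $\tp(e/Mc)$ et $\tp(e'/Mc')$ au-dessus de $c\ind_M c'$, échoue car $e\in\dcl^{\mathrm{eq}}(Mc)$ entraîne $e\nind_M c$ dès que $e\notin\acl^{\mathrm{eq}}(M)$.

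La démonstration de l'article procède tout autrement pour (i)~: elle pose $E=\acl(e)\cap F$ et $A=\acl(E,a)$ avec $e=f(a)$, utilise le lemme de Neumann pour obtenir $B\models\tp(A/E,e)$ avec $A\cap B=E$, construit une suite $E$-indiscernable $(A_i)_{i<\omega}$ de clôtures algébriques deux à deux d'intersection $E$ et deux à deux indépendantes au-dessus l'une de l'autre, puis exploite la stabilité de $T_0$ (indiscernabilité totale, base canonique) et la remarque \ref{R:dcl_inters} pour placer $\cb_0(A_3/A_1A_2)$ dans $\dcl_0(E)$~; on en tire $A_3\indi 0_E A_1$, donc la $T$-indépendance au-dessus de l'ensemble $E$, qui n'est pas un modèle, et enfin $e\in\acl^{\mathrm{eq}}(E)$. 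C'est précisément cette indépendance sur $E$, absente de votre esquisse, qui fournit ensuite les deux réalisations indépendantes de $\tp(a/E)$ de même image par $f$, nécessaires pour monter le diagramme d'amalgamation sur l'ensemble algébriquement clos $E$ dans la preuve de l'élimination faible (votre point (ii)). Sans ce travail préalable, ni (i) ni (ii) ne sont établis.
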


\begin{proof}
  Vérifions d'abord que la théorie $T$ élimine géométriquement les
  imaginaires au-dessus d'une sous-structure élémentaire nommée. Soit
  $e=f(a)$ un imaginaire qui est l'image d'un uple réel $a$ par une
  fonction $f$ définissable sans paramètres. Posons $E$ la collection
  d'éléments réels de $F$ algébriques sur $e$.

  Si $a$ est algébrique sur $E$, alors on a le résultat. Sinon,
  puisque $E$ est algébrique sur $e$, il suffit de montrer que $e$ est
  algébrique sur $E$. Notons $A=\acl(E,a)$. Le lemme de Neumann
  \cite{pNeu76} entraîne qu'il existe $B$ réalisant le type
  $\tp(A/E,e)$ avec

$$A\cap B=E.$$

En particulier, l'ensemble $B$ est égal à $\acl(E,b)$ avec
$b\equiv_{E,e} a$, donc $f(b)=e=f(a)$. Construisons maintenant une
suite de sous-structures $\{A_i=\acl(E,a_i)\}_{i <\omega}$
contenant $E$, avec $A_1=A$ et $A_2=B$ telle que
$A_n A_{n+1} \equiv_E A,B$ et

$$A_{n+1} \ind_{A_n} A_1,\ldots,A_{n-1}.$$

\noindent Alors  $A_i\cap A_j=E$ pour tout $i\neq j$ et
par {\bf Transitivité}

$$ A_k \ind_{A_j} A_i$$

\noindent pour tout $i<j<k$.

Un argument à la Ramsey nous permet de supposer que la suite
$\{A_i\}_{i <\omega}$ est $E$-in\-dis\-cer\-nable. L'élimination de
quantificateurs de $T_0$ entraîne que cette suite est également
$\LL_0$-indiscernable sur $E$. La stabilité de $T_0$ donne qu'elle est
totalement $\LL_0$-indiscernable.

La remarque \ref{R:stat} implique que le type $\tp_0(A_3/A_1)$ est
stationnaire et de même pour $\tp_0(A_3/A_2)$. Puisque $A_3\ind_{A_2}
A_1$, la caractérisation de l'indépendance entraîne que $$A_3\indi
0_{A_2} A_1.$$ Comme $A_1A_2 A_3$ a même $T_0$-type que $A_2 A_1
A_3$, on conclut que $A_3\indi 0_{A_1} A_2$. La base canonique
$\cb_0(A_3/A_1A_2)$ est  contenue dans
$\dcl_0(A_1)\cap\dcl_0(A_2)=\dcl_0(E)$, par la remarque
\ref{R:dcl_inters}. Ainsi

$$A_3 \indi 0_E A_1A_2,$$

\noindent ce qui entraîne que $A_3$ et $A_1$ sont $T$-indépendants
sur $E$. En particulier, l'imaginaire $e$ est  algébrique sur $E$, ce
qui donne l'élimination géométrique des imaginaires.

Supposons maintenant que la théorie $T$ vérifie le {\bf Théorème de
  l'indépendance} sur toute structure algébriquement close. Pour
  montrer  que $T$ élimine les imaginaires au-dessus de la
  sous-structure élémentaire  nommée, il
  suffit de montrer qu'elle les élimine  faiblement par la remarque
  \ref{R:EIensemblefini}.  L'indépendance

$$A_3 \indi 0_E A_1,$$

\noindent donne deux réalisations $\tilde a$ et $c$ du type $\tp(a/E)$
indépendantes sur $E$ telles que $f(c)=f(\tilde a)=f(a)=e$. On peut
supposer que $c$ est indépendant de $a$ sur $E$. Pour montrer que $e$
est définissable sur $E$, il suffit de montrer que la valeur de $f$
est constante sur toute autre réalisation du type $\tp(a/E)$. Sinon,
soit $d$ une réalisation avec $f(d)\neq f(a)$. Si $\tilde d$ réalise
le type $\tp(d/E,c)$ et est indépendante de $a$, la {\bf Transitivité}
donne que $\tilde d \ind_E a$ et $f(\tilde d)\neq f(c)=f(a)$. On peut
supposer que $d$ est indépendante de $a$.  Les types $\tp(a/E,c)$ et
$\tp(d/E,a)$ sont deux extensions non-déviantes du type $\tp(a/E)$ et
on a $c\ind_E a$. Le {\bf Théorème de l'indépendance} appliqué au
diagramme

{\begin{figure}[h]
   \centering
   \begin{tikzpicture}[scale=.8, text height=1ex, text depth=1ex]
 \def\Pi{(-4.2,0) ellipse (6 and 1)}
\def\Pii{(4.2,0) ellipse (6 and 1)}
\begin{scope}[rotate=-45, xscale=0.36, yscale=1]
        \path[draw,thick] \Pi;
 \end{scope}
 \begin{scope}[rotate=45, xscale=0.36, yscale=1]
    \path[draw,thick] \Pii;
     \end{scope}

    \node (0,-5)  (base) {};
    \node[above=-5mm of base] (E) {$E$};
\node[above=7.8mm of E] (ind) {$\ind$};
\node[left=8mm of ind] (a) {$a$};
\node[ right =8mm of ind] (c) {$c$};
\node[left=6mm of a] (P2) {$\tp(d/E,a)$};
\node[right =6mm of c] (P1) {$\tp(a/E,c)$};
\node[below=2mm of base] (P) {$\tp(d/E)=\tp(a/E)$};
 \end{tikzpicture}
 \end{figure}}

\noindent donne une réalisation $\zeta$
indépendante de $c,a$ sur $E$ avec

$$ \zeta \models \tp(d/E,a) \text{ et } \zeta \models \tp(a/E,c).$$

\noindent En particulier $f(\zeta)\neq f(a)=f(c)= f(\zeta)$, ce qui
est une contradiction.

\end{proof}

\begin{remark}\label{R:WEI_general}
  Notons que la démonstration ci-dessus s'appuie uniquement sur le
  fait que $T$ est une théorie simple contenant la partie universelle
  d'une théorie stable $T_0$ qui élimine les quantificateurs et les
  imaginaires dans un sous-langage $\LL_0$ telle que~:
\begin{itemize}
 \item la théorie $T$ satisfait  les hypothèses $(\ref{H:dcl})$ et
$(\ref{H:acl})$~;
\item pour des sous-structures algébriquement closes $A$ et $B$ d'un
modèle suffisamment saturé $F$ de $T$,
$$A\ind_{A\cap B} B \text{ si et seulement si } A\indi 0_{A\cap B}
B~;$$
\item la théorie $T$ vérifie le {\bf Théorème de
  l'Indépendance} sur toute structure algébriquement close.
\end{itemize}

En particulier, la théorie d'un corps algébriquement clos muni d'un
prédicat générique élimine les imaginaires.
\end{remark}

Certains corps munis d'opérateurs considérés dans le fait
\ref{F:exemples} sont stables, notamment les corps différentiellement
clos et les corps séparablement clos de degré d'imperfection
fini. Dans ces deux cas, le $\LL$-type d'un uple $a$ est déterminé par
le $\LL$-type sans quantificateurs de sa clôture définissable.

\begin{cor}\label{C:stable}
  Supposons que deux uples $a$ et $b$ dans $T$ ont même type dès qu'il
  existe un $\LL$-isomorphisme de leurs clôtures parfaites qui envoie
  $a$ sur $b$.  La théorie $T$ est alors stable.
\end{cor}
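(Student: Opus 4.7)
Par le théorème \ref{T:simple}, la théorie $T$ est déjà simple, et il ne reste qu'à établir la {\bf Stationnarité} des types sur une sous-structure élémentaire. Fixons donc $M \preceq F$ (de sorte que $M = \acl(M)$ est parfaite) et considérons $c \equiv_M d$ avec $c \ind_M A$ et $d \ind_M A$. L'objectif sera de montrer $c \equiv_{MA} d$ en construisant un $\LL$-isomorphisme entre $\pcl(MAc)$ et $\pcl(MAd)$ qui envoie $c$ sur $d$ et fixe $MA$ point par point, puis en invoquant l'hypothèse du corollaire appliquée aux uples $(m, c)$ et $(m, d)$, où $m$ énumère $MA$.

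La première étape consiste à reformuler l'indépendance au niveau des clôtures parfaites. Par l'hypothèse $(\ref{H:dcl})$, la sous-structure $\pcl(Mc)$ est $\LL_0$-interdéfinissable avec $\dcl_0(\sscl{Mc})$, si bien que l'indépendance $c \ind_M A$ (caractérisée dans le théorème \ref{T:simple}) équivaut à $\pcl(Mc) \indi 0_M \pcl(MA)$, et de même pour $d$.

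Dans un second temps, on exploitera l'hypothèse $(\ref{H:qftp})$. Comme $c \equiv_M d$ au sens de $T$, un $M$-automorphisme de $F$ envoyant $c$ sur $d$ se restreint en un $\LL$-isomorphisme entre $\pcl(Mc)$ et $\pcl(Md)$, donc $\pcl(Md) \models \qftp(\pcl(Mc)/M)$. On appliquera alors $(\ref{H:qftp})$ aux sous-structures parfaites $\pcl(Mc)$ et $\pcl(MA)$, $0$-indépendantes au-dessus de $M$, en remplaçant $\pcl(Mc)$ par $\pcl(Md)$ et en gardant $\pcl(MA)$ inchangée, pour déduire
\[ \sscl{\pcl(Md), \pcl(MA)} \models \qftp(\sscl{\pcl(Mc), \pcl(MA)}/M). \]
La première partie de $(\ref{H:qftp})$ assure que ces sous-structures engendrées sont parfaites, donc coïncident respectivement avec $\pcl(MAd)$ et $\pcl(MAc)$. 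L'isomorphisme correspondant envoie $c$ sur $d$ et fixe $\pcl(MA)$ point par point, puisque la même sous-structure joue le rôle de $B$ et de $B'$ dans l'application de $(\ref{H:qftp})$.

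La conclusion découlera immédiatement de l'hypothèse du corollaire, appliquée aux uples $(m, c)$ et $(m, d)$~: l'$\LL$-isomorphisme obtenu entre leurs clôtures parfaites $\pcl(MAc)$ et $\pcl(MAd)$ fournit $c \equiv_{MA} d$. Le point délicat sera de vérifier proprement que cet isomorphisme fixe les paramètres $MA$ point par point, et pas seulement comme sous-structure globale~; ceci est garanti par le fait que $\pcl(MA)$ apparaît comme sous-structure commune des deux côtés dans l'application de $(\ref{H:qftp})$.
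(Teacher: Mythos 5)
Votre démonstration est correcte et suit essentiellement la même voie que celle de l'article, qui se contente d'observer en une ligne que la stationnarité sur une sous-structure élémentaire découle directement de l'hypothèse $(\ref{H:qftp})$ sous la propriété supposée ; vous ne faites qu'expliciter cette application (avec $A=\pcl(Mc)$, $A'=\pcl(Md)$, $B'=B=\pcl(MA)$ au-dessus de $C=M$). Le seul point de détail est que l'équivalence entre $c\ind_M A$ et $\pcl(Mc)\indi 0_M\pcl(MA)$ se justifie plus directement via $\pcl\subset\acl$ et l'hypothèse $(\ref{H:acl})$ que par l'interdéfinissabilité avec $\dcl_0(\sscl{Mc})$, mais cela ne change rien à la validité de l'argument.
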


\begin{proof}
Il suffit de montrer que tout type sur un ensemble algébriquement clos
contenant une sous-structure élémentaire est stationnaire, ce qui
suit directement de l'hypothèse $(\ref{H:qftp})$ si $T$ satisfait la
propriété ci-dessus.
\end{proof}

\section{Groupes et plongements}\label{S:gps}

Tout groupe définissable dans un corps différentiellement clos se
plonge comme sous-groupe d'un groupe algébrique, ce qui a été montré
par Pillay \cite{aP97} pour répondre à une question de Kolchin. Cette
caractérisation joue un rôle important dans la théorie de Galois
différentielle des extensions de Picard-Vessiot \cite{aP98}. Kowalski
et Pillay ont obtenu un résultat similaire \cite{pKaP02}, modulo un
noyau fini, pour tout groupe connexe définissable dans un corps aux
différences générique. Il existe des résultats analogues pour décrire
les groupes définissables dans de nombreuses théories de corps, purs
ou munis d'opérateurs, par exemple dans un corps pseudo-fini
\cite{HrPi94} ou dans un corps séparablement clos \cite{mM94, eBfD02}.
Dans ce dernier cas, tout groupe définissable dans un corps
séparablement clos $L$ est définissablement isomorphe aux points
$L$-rationnels d'un groupe algébrique défini sur $L$.

Les démonstrations pour chacun de ces exemples s'appuient de façon
essentielle sur le théorème de configuration de groupe, dû à
Hrushovski \cite{eH90}, qui est une généralisation abstraite du
résultat de Weil \cite{aW55} produisant un groupe algébrique à partir
d'une loi de groupe birationnelle. La construction de Hrushovski donne
un groupe connexe constitué des germes de fonctions génériquement
définies.  Même dans le cas de la théorie des corps algébriquement
clos, le groupe des germes ainsi obtenu n'est pas forcément un groupe
algébrique, mais pro-algébrique~: une limite inverse de groupes
définissables.

Soit $G$ un groupe type-définissable dans une théorie simple. Un
élément $g$ dans $G$ est \emph{générique (à gauche)} sur l'ensemble
des paramètres $A$ si $$h\cdot g \ind A,h$$
\noindent dès que
$$h\ind_A g.$$
\noindent Un type est générique (à gauche) si toutes ses réalisations
le sont. Les types génériques existent~: ils coïncident avec les types
dans $G$ de rang stratifié local maximal.  En particulier, un type est
générique à droite si et seulement s'il l'est à gauche. Toute
extension ou restriction non-déviante d'un type générique l'est
aussi. L'inverse d'un générique est également générique. Enfin, le
produit de deux génériques indépendants l'est aussi et est indépendant
de chaque facteur. En outre, tout élément de $G$ peut s'écrire comme
le produit de deux génériques, pas forcément indépendants entre eux.

Étant donné un ensemble de paramètres $A$, la \emph{composante
  connexe} $G_A^0$ de $G$ sur $A$ est l'intersection de tous les
sous-groupes $G$ d'indice borné type-définissables sur $A$. Le
sous-groupe $G_A^0$ est distingué et type-définissable sur $A$
d'indice borné. Le groupe $G$ est \emph{connexe sur} $A$ si
$G=G_A^0$. Si la théorie est stable ou supersimple, alors la
composante connexe sur $A$ d'un groupe type-définissable est
l'intersection des sous-groupes définissables d'indice fini.

\noindent Pour une théorie stable, la composante connexe ne dépend pas
de l'ensemble de paramètres choisi et elle est définissable sur le
même ensemble de paramètres que $G$. De plus, chaque classe de $G^0$
contient un seul type générique et $G$ agit par translation sur
l'ensemble des types génériques, qui sont en correspondance avec
$G/G^0$. Dans la cas d'une théorie simple, la composante connexe peut
contenir plusieurs types génériques. En revanche, on a un résultat
partiel \cite[Proposition 2.2]{PSW98}.

\begin{prop}\label{P:prod_gen}
  Au-dessus d'une sous-structure élémentaire $N$, étant donnés trois
  types génériques $p$, $q$ et $r$ de $G_N^0$, alors il existe deux
  réalisations $g\models p$ et $h\models q$ indépendantes sur $N$
  telles que $g\cdot h\models r$.
\end{prop}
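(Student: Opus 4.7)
Le plan est d'invoquer le Théorème de l'indépendance ($3$-amalgamation) de la Définition \ref{R:KP}, conjointement avec les propriétés élémentaires du calcul des génériques dans un groupe type-définissable d'une théorie simple~: l'inverse d'un générique est générique, et le produit de deux génériques indépendants est générique et indépendant de chaque facteur.

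On commencerait par une reformulation commode. Fixons $a \models r$. Il suffit de produire $g \models p$ avec $g \ind_N a$ tel que $g^{-1}a \models q$~: en effet, en posant $h = g^{-1} a$, on a $gh = a \models r$, $h \models q$, et $g \ind_N h$ découle de $g \ind_N a$ car l'application $x \mapsto g^{-1} x$, définissable à paramètres dans $Ng$, préserve la non-déviation sur $N$.

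On produirait ensuite deux solutions partielles à l'aide d'Extension. D'une part, en prenant $g_0 \models p$ avec $g_0 \ind_N a$ et en posant $h_0 := g_0^{-1} a$, on obtient $g_0 \ind_N h_0$ et $g_0 h_0 = a \models r$, mais $h_0$ réalise un type générique $q_0$ éventuellement différent de $q$. De manière symétrique, en choisissant $h^* \models q$ avec $h^* \ind_N a$, on trouve $g^* := a (h^*)^{-1}$ générique tel que $g^* \ind_N h^*$ et $g^* h^* = a$, mais $\tp(g^*/N)$ peut différer de $p$. Le cœur de la démonstration consisterait alors, en prenant une copie indépendante $a' \models r$ de $a$ sur $N$ et en reproduisant la construction au-dessus de $a'$, à appliquer le Théorème de l'indépendance sur $N$ pour souder la bonne coordonnée de gauche d'une décomposition à la bonne coordonnée de droite de l'autre, tout en imposant que le produit réalise $r$.

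L'obstacle principal est précisément là~: l'application directe du Théorème de l'indépendance exige l'égalité de deux types sur $N$, alors que les génériques intermédiaires $q_0$ et $q$ (ou $p^*$ et $p$) peuvent différer. Surmonter cette difficulté demande d'exploiter l'action par translation de $G$ sur ses propres types génériques (en nombre borné sur $N$)~: multiplier à gauche par un générique est une bijection définissable qui permet de passer entre génériques, et en itérant le $3$-amalgamation le long d'une chaîne de translations génériques on parvient à aligner les types pour que le Théorème de l'indépendance s'applique et fournisse la paire $(g,h)$ voulue. C'est là le pas technique délicat à exécuter avec soin.
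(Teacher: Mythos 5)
Votre texte n'est pas une démonstration mais un plan, et le pas que vous laissez explicitement « à exécuter avec soin » est précisément tout le contenu de l'énoncé. Notez d'abord que l'article ne démontre pas cette proposition : il la cite telle quelle de \cite[Proposition 2.2]{PSW98} ; il n'y a donc pas de preuve interne à laquelle comparer la vôtre. Votre mise en place est correcte (réduction, pour $a\models r$ fixé, à la recherche d'un $g\models p$ avec $g\ind_N a$ et $g^{-1}a\models q$ ; obtention de deux solutions partielles par {\bf Extension}), et vous identifiez correctement l'obstacle : les types auxiliaires $q_0$ et $p^*$ n'ont aucune raison de coïncider avec $q$ et $p$, de sorte que le {\bf Théorème de l'indépendance} --- qui exige d'amalgamer deux extensions non-déviantes d'un \emph{même} type sur $N$ --- ne s'applique pas tel quel.

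Le remède que vous esquissez (« itérer la $3$-amalgamation le long d'une chaîne de translations génériques ») ne constitue pas un argument. Dans une théorie simple, le type sur $N$ du produit de deux génériques indépendants n'est pas déterminé par les types des facteurs (c'est exactement ce que la proposition affirme pouvoir contrôler) ; cette obstruction se représente donc à l'identique à chaque maillon de votre chaîne, et rien ne garantit que les types « s'alignent » après un nombre quelconque d'étapes --- vous ne proposez aucun mécanisme de terminaison ni aucun invariant qui progresse. Toute démonstration doit produire une configuration dans laquelle le Théorème de l'indépendance s'applique légitimement, c'est-à-dire deux extensions non-déviantes d'un seul et même type sur $N$ ; c'est ce que fait \cite{PSW98} à partir d'une configuration soigneusement choisie (deux décompositions d'une même réalisation de $r$, ce qui revient essentiellement à l'étude du stabilisateur d'un type générique de $G_N^0$, lequel est un sous-groupe contenant $G_N^0$ --- assertion dont la preuve est elle-même le lieu où intervient le théorème de l'indépendance). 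Cette idée est absente de votre plan ; en l'état, la preuve est incomplète.
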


Ziegler \cite[Theorem 1]{mZ04} montre une sorte de  contraposée à la
proposition précédente, qui se généralise facilement au cas d'une
théorie simple \cite[lemme  1.2]{BMPW14}. Elle permet de déduire le
résultat suivant~:

\begin{lemma}\label{L:Iso}
  À l'intérieur d'un modèle d'une théorie simple, soient $G_1$ et
  $G_2$ deux groupes types-définissables sur une sous-structure
  élémentaire $N$. Étant donnés des éléments $a_1$ et $b_1$ de $G_1$,
  $a_2$ et $b_2$ de $G_2$ tels que~:
\begin{enumerate}
\item\label{I:interalg} les éléments $a_1$ et $a_2$, sont
  interdéfinissables sur $N$, et de même pour $b_1$ et $b_2$, ainsi
  que pour $a_1\cdot b_1$ et $a_2\cdot b_2$~;
\item les éléments $a_1$, $b_1$ et $a_1\cdot b_1$ sont indépendants
  deux-à-deux sur $N$~;
\end{enumerate}

\noindent alors, l'élément $a_1$, resp.\ $a_2$, est générique dans un
translaté type-définissable sur $N$ d'un sous-groupe $H_1$
type-définissable sur $N$ de $G_1$, resp.\ $H_2$ de $G_2$.
En outre, les sous-groupes $H_1$ et
$H_2$ sont connexes et définissablement isomorphes sur $N$ .
\end{lemma}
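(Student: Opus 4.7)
La stratégie se décompose en deux étapes. D'abord, pour obtenir les sous-groupes $H_1$ et $H_2$ et la généricité des $a_i$ dans des translatés, j'applique séparément à chacun des groupes $G_1$ et $G_2$ la contraposée de la proposition \ref{P:prod_gen} démontrée dans \cite[lemme 1.2]{BMPW14} à partir de \cite[Theorem 1]{mZ04}. Appliquée au triplet $(a_1,b_1,a_1\cdot b_1)$ dans $G_1$, dont les trois éléments sont deux-à-deux $N$-indépendants par l'hypothèse (2), cette contraposée fournit un sous-groupe connexe $H_1$ type-définissable sur $N$ contenu dans $G_1$ tel que $a_1$, $b_1$ et $a_1\cdot b_1$ sont génériques sur $N$ dans des translatés type-définissables sur $N$ de $H_1$. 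Le même argument appliqué au triplet $(a_2,b_2,a_2\cdot b_2)$ dans $G_2$, dont l'indépendance deux-à-deux sur $N$ découle de celle du triplet correspondant dans $G_1$ via l'interdéfinissabilité énoncée en (1), produit un sous-groupe connexe $H_2\subset G_2$ avec les mêmes propriétés.

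Pour construire l'isomorphisme $H_1 \to H_2$, remarquons que l'interdéfinissabilité entre $a_1$ et $a_2$ sur $N$ est réalisée par deux fonctions $N$-définissables réciproques l'une de l'autre sur les $N$-conjugués de $a_1$ et $a_2$ respectivement. Celles-ci déterminent un germe de bijection $N$-définissable entre le translaté générique contenant $a_1$ et celui contenant $a_2$ ; en composant par les translations par $a_1^{-1}$ et $a_2^{-1}$, qui sont elles aussi interdéfinissables sur $N$, on en déduit un germe de bijection $\tilde\alpha$ entre les génériques de $H_1$ et ceux de $H_2$ envoyant l'élément neutre sur l'élément neutre. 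De la même façon, l'interdéfinissabilité entre $b_1$ et $b_2$ d'une part, et entre $a_1\cdot b_1$ et $a_2\cdot b_2$ d'autre part, implique que $\tilde\alpha(g\cdot h)=\tilde\alpha(g)\cdot\tilde\alpha(h)$ pour $g,h$ génériques indépendants dans $H_1$, c'est-à-dire que $\tilde\alpha$ est un germe d'homomorphisme.

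La dernière étape, que j'attends être le point technique le plus délicat, consiste à étendre le germe d'homomorphisme $\tilde\alpha$ en un véritable isomorphisme type-définissable sur $N$ entre $H_1$ et $H_2$. On utilise que, dans la théorie simple ambiante, tout élément d'un groupe connexe type-définissable s'écrit comme produit de deux génériques (pas forcément indépendants), et on définit l'image d'un élément arbitraire $x=g\cdot h$ de $H_1$ en posant $\tilde\alpha(g)\cdot\tilde\alpha(h)$ ; il faudra vérifier que ceci ne dépend pas de la décomposition choisie, en utilisant la compatibilité de $\tilde\alpha$ avec le produit sur un couple de génériques indépendants, puis qu'on obtient ainsi un homomorphisme $N$-définissable. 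L'existence d'un germe inverse $\tilde\alpha^{-1}$, construit de façon symétrique en échangeant les rôles de $G_1$ et $G_2$, garantit que l'homomorphisme obtenu est bien un isomorphisme de $H_1$ sur $H_2$.
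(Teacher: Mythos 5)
Votre première étape est correcte et conforme à ce que suggère le texte~: la version du théorème de Ziegler pour les théories simples \cite[lemme 1.2]{BMPW14}, appliquée au triplet deux-à-deux $N$-indépendant $(a_1,b_1,a_1\cdot b_1)$, fournit bien un sous-groupe connexe $H_1$ type-définissable sur $N$ tel que ces trois éléments soient génériques dans des translatés de $H_1$, et de même pour $H_2$ dans $G_2$. En revanche, votre construction de l'isomorphisme comporte une lacune réelle. D'abord, le germe $\tilde\alpha(h)=a_2^{-1}\cdot f(a_1\cdot h)$ n'est défini qu'à paramètres dans $N\cup\{a_1\}$, et non sur $N$~; rien n'est dit sur son indépendance du choix de $a_1$. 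Surtout, l'affirmation selon laquelle l'interdéfinissabilité de $b_1$ avec $b_2$ et de $a_1\cdot b_1$ avec $a_2\cdot b_2$ \emph{implique} que $\tilde\alpha(g\cdot h)=\tilde\alpha(g)\cdot\tilde\alpha(h)$ pour $g,h$ génériques indépendants de $H_1$ n'est pas justifiée~: les trois interdéfinissabilités sont portées par trois translatés en général distincts de $H_1$ et ne donnent la relation $f''(x\cdot y)=f(x)\cdot f'(y)$ que pour les couples $(x,y)$ réalisant $\tp(a_1,b_1/N)$. Pour en déduire la propriété d'homomorphisme, il faudrait montrer que l'accroissement $f(x)^{-1}\cdot f(x\cdot h)$ ne dépend pas du point base générique $x$, ce qui est précisément le contenu non trivial de l'énoncé~; c'est d'autant plus délicat dans une théorie simple que les types génériques d'un translaté ne sont pas uniques et que le type d'un couple indépendant n'est pas déterminé par les types de ses coordonnées. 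Le prolongement du germe à tout $H_1$ via l'écriture d'un élément comme produit de deux génériques repose lui aussi sur cette propriété et sur le théorème de l'indépendance~; vous signalez la difficulté sans la résoudre.

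La dérivation visée par le texte, qui ne donne d'ailleurs pas de démonstration détaillée, évite toutes ces manipulations de germes~: on applique le lemme de Ziegler \emph{une seule fois}, dans le groupe produit $G_1\times G_2$, au triplet $(a_1,a_2)$, $(b_1,b_2)$, $(a_1\cdot b_1,a_2\cdot b_2)$, qui est deux-à-deux indépendant sur $N$ par l'hypothèse (2) combinée à l'interdéfinissabilité (1). On obtient un sous-groupe connexe $H$ de $G_1\times G_2$, type-définissable sur $N$, dont $(a_1,a_2)$ est générique dans un translaté. L'interdéfinissabilité des deux coordonnées sur $N$ entraîne que les projections de $H$ sur chaque facteur sont injectives, et l'unicité à commensurabilité près des sous-groupes fournis par le lemme de Ziegler identifie leurs images à $H_1$ et à $H_2$. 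Le sous-groupe $H$ est alors le graphe d'un isomorphisme $N$-définissable de $H_1$ sur $H_2$, ce qui conclut sans avoir à prolonger un germe.
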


\noindent Notons que les sous-groupes $H_1$ et $H_2$ sont uniques, à
commensurabilité près. Ce lemme s'adapte facilement au cas des groupes
$*$-définissables.

\begin{remark}\label{R:Iso}
  Si nous avons des éléments seulement interalgébriques dans le lemme
  précédent, alors les groupes obtenus sont définissablement
  isogènes~: une correspondance à fibres finies entre deux
  sous-groupes d'indice borné.
\end{remark}

Par la suite, fixons une théorie $T$ satisfaisant les hypothèses
$(\ref{H:dcl})$, $(\ref{H:acl})$, $(\ref{H:qftp})$,  $(\ref{H:full})$
et $(\ref{H:ext})$ par  rapport à une théorie stable  $T_0$ avec
élimination des quantificateurs et des imaginaires. On travaille à
l'intérieur d'un modèle $F$ de $T$ suffisamment saturé.

\begin{prop}\label{P:isog}

  Soit $G$ un groupe type-définissable sur une sous-structure
  élémentaire $|T|^+$-saturée $N$ de $F$.

\begin{itemize}
\item Si $F$ est $\LL_0$-définissablement clos, alors $G$ est isogène
  à un sous-groupe de $H(F)$, avec $H$ un groupe $\LL_0$-définissable
  sur $N$.
\item Si $T_0$ est la théorie d'un corps algébriquement clos de
  caractéristique fixée, alors $G$ est isogène à un sous-groupe de
  $H(F)$, avec $H$ un groupe algébrique sur $N$.
\item   Si la théorie $T$ est stable ou supersimple et
$T_0$ est la théorie d'un  corps  algébriquement clos de caractéristique
  fixée, alors tout groupe définissable admet un sous-groupe
  définissable  d'indice fini qui s'envoie par un morphisme
  définissable à noyau fini dans le sous-groupe des points
  $F$-rationnels d'un groupe algébrique défini sur $N$.
\end{itemize}

\end{prop}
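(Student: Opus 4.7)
Le plan est de construire, à partir de deux génériques indépendants $a, b$ de $G$ et de leur produit $c = a \cdot b$, une configuration de groupe algébrique dans la théorie stable $T_0$ au-dessus de $N$, d'appliquer le théorème de configuration de groupe de Hrushovski dans $T_0$ pour obtenir un groupe $\LL_0$-définissable (respectivement algébrique) $H$ sur $N$, puis de remonter à $T$ via le Lemme~\ref{L:Iso} et la Remarque~\ref{R:Iso}.

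On se ramène d'abord au cas $G = G_N^0$ connexe, le cas général s'en déduisant puisque $G_N^0$ est d'indice borné dans $G$. La $|T|^+$-saturation de $N$ et la Proposition~\ref{P:prod_gen} permettent de choisir deux $N$-génériques indépendants $a, b \in G$ tels que $c := a \cdot b$ soit également $N$-générique. Le triplet $(a, b, c)$ est alors deux-à-deux $N$-indépendant dans $T$~; par la caractérisation de l'indépendance du Théorème~\ref{T:simple}, il l'est aussi dans $T_0$. La définissabilité de la multiplication dans $G$ et l'hypothèse~(\ref{H:acl}) donnent $c \in \acl_0(\sscl{N,a,b})$, ainsi que les inclusions symétriques pour $a$ et $b$~: le triplet $(a, b, c)$ forme une configuration de groupe algébrique pour $T_0$ sur $N$.

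Le théorème de configuration de groupe dans la théorie stable $T_0$ (qui élimine les quantificateurs et les imaginaires, au-dessus de la $\LL_0$-sous-structure élémentaire $N$) fournit alors un groupe type-définissable connexe $H^\ast$ sur $N$ et un triplet générique $(\alpha, \beta, \gamma)$ dans $H^\ast$ avec $\gamma = \alpha \cdot \beta$, deux-à-deux $N$-indépendant et deux-à-deux $T_0$-interalgébrique sur $N$ avec $(a, b, c)$. L'hypothèse~(\ref{H:dcl}) (triviale sous la $\LL_0$-clôture définissable de $F$ du premier point) permet de prendre $\alpha, \beta, \gamma$ dans $F$. Par des résultats classiques sur les groupes type-définissables dans les théories stables — notamment le théorème de Weil lorsque $T_0$ est la théorie des corps algébriquement clos — on plonge $H^\ast$ dans un groupe $\LL_0$-définissable (respectivement algébrique) $H$ sur $N$, de sorte que $\alpha \in H(F)$. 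Le Lemme~\ref{L:Iso}, appliqué à $G_1 = G$ et $G_2 = H(F)$ avec les triplets $(a, b, c)$ et $(\alpha, \beta, \gamma)$ via la Remarque~\ref{R:Iso} (pour tenir compte de l'interalgébricité plutôt que de l'interdéfinissabilité), fournit des sous-groupes type-définissables connexes $K_1 \subset G$ et $K_2 \subset H(F)$ définissablement isogènes sur $N$, avec $a$ générique dans un translaté de $K_1$~; la connexité de $G$ force $K_1 = G$, d'où l'isogénie cherchée pour les deux premiers points.

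Pour le troisième point, dans le cas stable ou supersimple, la composante connexe d'un groupe définissable est elle-même définissable d'indice fini~; en appliquant le deuxième point à cette composante et en raffinant la correspondance à fibres finies de la Remarque~\ref{R:Iso} en un morphisme définissable à noyau fini (opération standard lorsque les sous-groupes concernés sont définissables d'indice fini, quitte à remplacer $H$ par un quotient par un sous-groupe fini, qui reste un groupe algébrique), on obtient la conclusion. Le principal obstacle sera d'assurer la compatibilité des multiplications de $G$ et de $H^\ast$ via les interalgébricités fournies par le théorème de configuration~: vérifier que $\gamma = \alpha \cdot \beta$ correspond effectivement, via l'identification interalgébrique, à $c = a \cdot b$. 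Cette compatibilité est précisément ce que garantit le théorème de Hrushovski, mais sa vérification exige de manier soigneusement les indépendances deux-à-deux dans les deux théories $T$ et $T_0$, en s'appuyant sur la caractérisation de l'indépendance du Théorème~\ref{T:simple}.
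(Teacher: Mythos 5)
Votre stratégie d'ensemble est celle de l'article (configuration de groupe dans $T_0$ au-dessus de $N$, théorème de Hrushovski, puis lemme \ref{L:Iso} et remarque \ref{R:Iso} pour revenir à $G$), mais deux étapes présentent de véritables lacunes. La première : le triplet $(a,b,a\cdot b)$, deux-à-deux indépendant sur $N$ et dont chaque élément est algébrique sur les deux autres, n'est pas une configuration de groupe au sens du théorème de Hrushovski, qui prend en entrée une configuration à six points. L'article prend trois génériques $a$, $b$, $c$ de $G$ indépendants sur $N$ et forme l'hexagone constitué de $\acl(a,N)$, $\acl(b,N)$, $\acl(c,N)$, $\acl(a\cdot b,N)$, $\acl(c\cdot a,N)$ et $\acl(c\cdot a\cdot b,N)$. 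Votre triangle se complète certes en une telle configuration puisqu'il provient du groupe $G$, mais cette complétion doit être effectuée explicitement ; telle quelle, l'invocation du théorème ne s'applique pas.

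La seconde lacune, plus sérieuse pour le deuxième point de l'énoncé, est que vous écartez en une phrase le fait que le groupe $H^*$ obtenu n'est a priori que $*$-définissable (pro-algébrique) et que ses génériques vivent dans $\dcl_0(F)$ et non dans $F$. L'article souligne explicitement qu'en général on ne sait ni si ces génériques tombent dans $H(F)$, ni si $H(F)$ est un sous-groupe. Lorsque $T_0$ est la théorie des corps algébriquement clos en caractéristique positive, il faut un argument supplémentaire : la version modèle-théorique du théorème de Weil due à Bouscaren fournit un corps $k_0$ finiment engendré et une bijection $\LL_0$-définissable $f$, et la composition avec une puissance du Frobenius (uniforme grâce à la finitude de $k_0$) ramène les génériques dans $F$ et produit un véritable groupe algébrique $\tilde{H}$ défini sur $N$ ; c'est seulement alors que la remarque \ref{R:Iso} s'applique à l'intérieur de $F$. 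L'hypothèse (\ref{H:dcl}) donne une interdéfinissabilité élément par élément avec des uples de $F$, mais pas l'uniformité nécessaire pour obtenir un groupe définissable dont les points pertinents soient $F$-rationnels. Le premier point (où $F$ est $\LL_0$-définissablement clos) et le traitement du troisième par compacité sont, eux, conformes à l'article.
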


\begin{proof}

  \noindent En prenant la composante connexe sur $N$ de $G$, on peut
  supposer que $G$ est connexe sur $N$. Pour deux génériques
  indépendants $a$ et $b$ du groupe $G$ sur $N$, les hypothèses
  (\ref{H:acl}) et (\ref{H:qftp}) donnent l'inclusion suivante~:
$$ \acl(a\cdot b, N) \subset \acl_0(\acl(a, N)\cup\acl(b, N)).$$

\noindent En prenant trois génériques $a$, $b$ et $c$ de $G$
indépendants sur $N$, on obtient la $T_0$-configuration de groupe
suivante~:

\begin{figure}[H]
  \centering
\scalebox{.7}{
  \begin{tikzpicture}[thick,scale=1, every
node/.style={scale=1},text height=2ex,text depth=2ex]

  \draw (0,4) node [above = -1mm] {$\acl(a, N)$} ;
     \fill   (0,4)   circle (3pt);

\draw (2,2) node [above right] {$\acl(c, N)$} ;
\fill (2,2) circle
(3pt);

 \draw (-2,2) node [above left] {$\acl(b, N)$} ;
 \fill   (-2,2)  circle (3pt);
 \draw (-4,0) node [below left] {$\acl(a\cdot b, N)$} ;
      \fill  (-4,0)  circle (3pt);
       \draw     (4,0) node [below right ] {$\acl(c\cdot a, N)$} ;
       \fill   (4,0)  circle (3pt);
   \draw     (0,1.33) node [below = 3mm ] {$\acl(c\cdot a\cdot b, N)$} ;
      	\fill  (0,1.33)  circle
(3pt);

      \draw[very thick]  (0,4) --  (2,2) --  (4,0) -- (0,1.33)
--
(-2,2) ;
      \draw[very thick] (0,4) -- (-2,2) -- (-4,0) -- (0,1.33)
-- (2,2) ;
  \end{tikzpicture}
  }
\end{figure}

\noindent À partir de cette configuration, on peut construire de façon
analogue à \cite{HrPi94} un groupe connexe $\LL_0$-$*$-définissable
$H^*$ sur $N$ et deux génériques $h^*$ et $h'^*$ indépendants de $H^*$
dans $\dcl_0(F)$ tels que les éléments $\acl(a,N)$ et $h^*$ sont
$\LL_0$-interalgébriques sur $N$, et de même pour $\acl(b,N)$ et
$h'^*$, ainsi que pour $\acl(a\cdot b,N)$ et $h^*\cdot h'^*$. Par
stabilité et élimination des imaginaires de $T_0$, on peut supposer
que le groupe $H^*$ est une limite inverse de groupes
$\LL_0$-définissables sur $N$. Il existe ainsi un groupe $H$ de cette
limite inverse et deux génériques $h$ et $h'$ indépendants de $H$ tel
que $a \in \acl_0(h,N)$ et $h \in \acl_0(\acl(a,N)) \cap \dcl_0(F)$
(et de même pour $b$ relativement à $h'$, et $a\cdot b$ relativement à
$h\cdot h'$).

Quand $F$ n'est pas $\LL_0$-définissablement clos, on ne sait pas de
façon générale si les génériques $h$, $h'$ et $h\cdot h'$ tombent dans
$H(F)$ ni si ce dernier ensemble forme un sous-groupe. Par contre, si
$F$ est $\LL_0$-définissablement clos, la remarque \ref{R:Iso}
entraîne que $G$ est isogène à un sous-groupe de $H(F)$. Ce
sous-groupe peut être propre car le $\LL_0$-générique $h$ n'est pas
forcement un générique de $H$ au sens de $T$.

Si $T_0$ est la la théorie d'un corps algébriquement clos de
caractéristique positive fixée, on peut supposer que $H$ est également
connexe sur $N$. Par {\bf Caractère fini}, il existe une
sous-structure élémentaire $N_0$ de $N$ de taille $|T|$ contenant tous
les paramètres témoignant les interalgébricités ci-dessus et les
paramètres nécessaires pour définir $H$.  Par saturation de $N$, il
existe une suite $\{h_i\}_{i \in \N}$ de $\LL_0$-génériques
indépendants de $H$ sur $N_0$ qui appartiennent au corps
$k=\dcl_0(F)\cap\acl_0(N)$.

\noindent Par la remarque \ref{R:stat}, le $\LL_0$-type d'éléments de
$\dcl_0(F)$ sur $N_0$ est stationnaire. L'indépendance algébrique pour
un $\LL_0$-type stationnaire est équivalente au fait que les corps
engendrés soient linéairement disjoints. Ainsi, une adaptation directe
de la démonstration du \cite[Theorem 1-B]{eB89} entraîne l'existence
d'un corps $k_0$ finiment engendré sur $N_0$ contenu dans $k$ et d'une
bijection $\LL_0$-définissable $f$ sur $k_0$ tels que
$f(h^{-1}) \in k_0(f(h))$ et $f(h\cdot h')$ appartient à
$k_0(f(h),f(h'))$ (où $h^{-1}$ est l'inverse de $h$ dans le groupe
$H$). En composant avec une certaine puissance du Frobenius, on peut
supposer que $k_0 \subset N$ et que $f(h)$ et $f(h')$ sont dans $F$.
Par le théorème de Weil \cite[Theorem 4.1]{eBfD02}, on obtient un
groupe algébrique $\tilde H$ défini sur $k_0$ et deux génériques
$\tilde h$ et $\tilde h'$ indépendants sur $N$ tel que
$a \in \acl_0(\tilde h,N)$ et
$\tilde h \in \acl_0(\acl(a,N)) \cap \dcl_0(F)$ (et de même pour $b$
relativement à $\tilde h'$, et $a\cdot b$ relativement à
$\tilde h\cdot \tilde h'$). La remarque \ref{R:Iso} entraîne à nouveau
que $G$ est isogène à un sous-groupe de $\tilde H(F)$.

Si $T$ est stable ou supersimple, la composante connexe sur $N$ d'un
groupe définissable est l'intersection des sous-groupes
$N$-définissables d'indice fini. Un argument de compacité entraîne
l'existence d'un sous-groupe définissable de $G$ d'indice fini qui se
plonge à noyau fini dans les points $F$-rationnels $H(F)$ d'un groupe
$\LL_0$-définissable. On en déduit le résultat souhaité.
\end{proof}

En supposant un certain contrôle sur la clôture
définissable, nous allons montrer comment le théorème de
Weil-Hrushovski permet de plonger définissablement tout groupe
type-définissable connexe à l'intérieur d'un groupe
$L_0$-définissable. On retrouve alors les résultats connus dans le cas
différentiel et pour les $\sigma$-groupes dont le produit est donné
par des fonctions $\sigma$-rationnelles ainsi qu'une version faible du
cas des corps séparablement clos.

\begin{prop}\label{P:WH}
  Soit $G$ un groupe connexe type-définissable à paramètres dans une
  sous-structure élémentaire $|T|^+$-saturée $N$ de $F$ tel que,
  lorsque $a$ et $b$ sont deux génériques indépendants de $G$ sur $N$,
  on a
\[\label{E:cond}
\dcl(a\cdot b,N) \subset \dcl_0(\dcl(a,N)\cup\dcl(b,N)). \tag{$\natural$}
\]
Alors, si $F$ est $\LL_0$-définissablement clos ou si $T_0$ est la
théorie des corps algébriquement clos, il existe un groupe
$\LL_0$-définissable $H$ à paramètres dans $N$ tel que le groupe $G$
se plonge définissablement dans le sous-groupe $H(F)$ constitué des
points $F$-rationnels.

\noindent Lorsque $T_0$ est la théorie d'un corps algébriquement clos
de caractéristique fixée, on peut supposer que le groupe $H$ est un
groupe algébrique.
\end{prop}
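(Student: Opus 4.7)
Le plan est d'adapter la démonstration de la Proposition \ref{P:isog} en exploitant la condition $(\ref{E:cond})$ pour remplacer les interalgébricités par des interdéfinissabilités. Ceci permettra d'appliquer le Lemme \ref{L:Iso} (au lieu de la Remarque \ref{R:Iso}) afin d'obtenir un plongement définissable plutôt qu'une simple isogénie.

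Je commencerais par prendre trois génériques $a$, $b$, $c$ de $G$ deux-à-deux indépendants sur $N$. La condition $(\ref{E:cond})$ appliquée successivement aux couples $(a,b)$, $(c,a)$ et $(c,a\cdot b)$ fournit, au-dessus de $N$, une configuration de groupe au sens de $T_0$ dont les six sommets --- les clôtures définissables $\dcl(a,N)$, $\dcl(b,N)$, $\dcl(c,N)$, $\dcl(a\cdot b,N)$, $\dcl(c\cdot a,N)$ et $\dcl(c\cdot a\cdot b,N)$ --- satisfont les relations d'interdéfinissabilité voulues à l'intérieur de $\dcl_0(F)$.

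Ensuite, j'appliquerais le théorème de configuration de groupe de Hrushovski à l'intérieur de la théorie stable $T_0$ pour produire un groupe connexe $\LL_0$-$*$-définissable $H^*$ sur $N$ ainsi que deux génériques indépendants $h$ et $h'$ dans $\dcl_0(F)$ tels que $a$ et $h$ soient interdéfinissables sur $N$, et de même pour les couples $(b, h')$ et $(a\cdot b, h\cdot h')$. La stabilité et l'élimination des imaginaires de $T_0$ permettent d'écrire $H^*$ comme limite inverse de groupes $\LL_0$-définissables sur $N$, d'où l'existence d'un groupe $\LL_0$-définissable $H$ dans cette famille conservant les interdéfinissabilités sur ses génériques. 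Lorsque $F$ est $\LL_0$-définissablement clos, les génériques $h$ et $h'$ appartiennent à $F$, et le Lemme \ref{L:Iso} appliqué aux couples interdéfinissables $(a, h)$, $(b, h')$ et $(a\cdot b, h\cdot h')$ fournit un sous-groupe connexe type-définissable de $H(F)$ qui est définissablement isomorphe à $G$ sur $N$, d'où le plongement souhaité.

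Dans le cas où $T_0$ est la théorie des corps algébriquement clos, je suivrais la même descente qu'à la fin de la preuve de la Proposition \ref{P:isog}~: par $|T|^+$-saturation de $N$, on extrait des $\LL_0$-génériques indépendants dans le corps $\dcl_0(F) \cap \acl_0(N)$, puis une adaptation de \cite[Theorem 1-B]{eB89} suivie du théorème de Weil \cite[Theorem 4.1]{eBfD02} permet de remplacer $H$ par un groupe algébrique $\tilde H$ défini sur $N$, et on conclut à nouveau par le Lemme \ref{L:Iso}. Le point délicat sera de vérifier que la condition $(\ref{E:cond})$ se propage effectivement au long de la construction, de sorte que les générateurs abstraits produits par Hrushovski soient véritablement interdéfinissables (et non seulement interalgébriques) avec $a$, $b$ et $a\cdot b$~; cette interdéfinissabilité, plus forte que celle disponible dans la preuve de la Proposition \ref{P:isog}, est l'ingrédient crucial qui justifie l'usage du Lemme \ref{L:Iso} au lieu de la Remarque \ref{R:Iso}.
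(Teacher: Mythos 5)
Votre stratégie générale (reprendre la preuve de la proposition \ref{P:isog} en remplaçant les interalgébricités par des interdéfinissabilités, puis conclure par le lemme \ref{L:Iso} au lieu de la remarque \ref{R:Iso}) identifie bien l'enjeu, mais elle bute précisément sur le point que vous signalez vous-même comme \og délicat\fg, et qui constitue un trou véritable~: le théorème de configuration de groupe ne fournit en général qu'un groupe dont le générique est \emph{interalgébrique} avec le sommet donné, même lorsque les six sommets de la configuration sont reliés par des interdéfinissabilités. La construction par germes de fonctions perd la définissabilité et aucune hypothèse supplémentaire sur la configuration elle-même ne la restitue~; telle quelle, votre construction ne livre donc que l'isogénie de la proposition \ref{P:isog}, pas le plongement annoncé.

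La preuve du papier contourne ce problème en n'utilisant pas la configuration de groupe~: la condition $(\natural)$, jointe à la stationnarité du $T_0$-type $p_0$ de $a^*=\dcl(a,N)$ sur $N$ (remarque \ref{R:stat}), fournit une véritable \emph{fonction} $0$-définissable $f$ sur $p_0^{\otimes 2}$ telle que $(a\cdot b)^*=f(a^*,b^*)$, c'est-à-dire une loi de groupe générique. On vérifie l'associativité générique $f(a^*,f(b^*,c^*))=f(f(a^*,b^*),c^*)$ et on applique le théorème des groupes génériquement présentés de Hrushovski (\cite[Theorem 1]{eH90}, adapté aux uples infinis dans \cite[Theorem 4.1]{pKaP02}), qui, lui, produit un groupe dont le générique $h^*$ est \emph{interdéfinissable} avec $a^*$. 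Un second point manque dans votre ébauche~: pour que $f$ ait pour domaine $p_0^{\otimes 2}$ et que l'associativité générique ait un sens, il faut que $a$, $b$, $c$ ainsi que tous les produits pertinents réalisent le \emph{même} type générique $p$~; dans une théorie simple, la composante connexe peut porter plusieurs types génériques, et le papier arrange cela au moyen de la proposition \ref{P:prod_gen} et du théorème de l'indépendance. La descente vers un groupe algébrique dans le cas où $T_0$ est ACF suit en revanche bien le schéma de la fin de la proposition \ref{P:isog}, comme vous le proposez.
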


\begin{proof}

  Pour la preuve, on travaille au-dessus de la
  sous-structure élémentaire $N$.  Soient $a$ et $b$ deux
  réalisations indépendantes du même type générique $p$ de $G$ sur $N$.

  Vérifions d'abord que l'on peut supposer qu'il existe une
  réalisation $c$ de $p$ indépendante de $a,b$ telle que chacun des
  éléments dans $\{a,b, a\cdot b, c, b\cdot c, a\cdot b\cdot c\}$
  réalise le type $p$.

  \noindent Par la proposition \ref{P:prod_gen}, il existe
  $x_1\models p$ indépendant de $a, b$ avec $b\cdot x_1 \models p$. De
  même, il existe $x_2\models p$ indépendant de $a, b$ avec
  $ a\cdot b \cdot x_2 \models p$. Le {\bf Théorème de l'indépendance}
  appliqué au diagramme suivant entraîne l'existence d'un élément $c$
  comme souhaité.  \vskip1cm
\begin{figure}[h]
   \centering
   \begin{tikzpicture}[scale=.8, text height=1ex, text depth=1ex]
 \def\Pi{(-4.2,0) ellipse (6 and 1)}
\def\Pii{(4.2,0) ellipse (6 and 1)}
\begin{scope}[rotate=-45, xscale=0.36, yscale=1]
        \path[draw,thick] \Pi;
 \end{scope}
 \begin{scope}[rotate=45, xscale=0.36, yscale=1]
    \path[draw,thick] \Pii;
     \end{scope}

    \node (0,-5)  (base) {};
    \node[above=-5mm of base] (E) {};
\node[above=9.2mm of E] (ind) {$\ind\limits_N$};
\node[left=8mm of ind] (a) {$b$};
\node[ right =8mm of ind] (c) {$a\cdot b$};
\node[left=6mm of a] (P2) {$\tp(b\cdot x_1/b, N)$};
\node[right =6mm of c] (P1) {$\tp(a\cdot b\cdot x_2/a\cdot b, N)$};
\node[below=2mm of base] (P) {$p$};
 \end{tikzpicture}
 \end{figure}

 Posons $a^*=\dcl(a, N)$, et de même pour $b$, $c$ et les produits
 correspondants. Notons que $a^*$, $b^*$, $(a\cdot b)^*$, $c^*$,
 $(c\cdot a)^*$ et $(c\cdot a\cdot b)^*$ sont des réalisations du même
 $T_0$-type sur $N$, que l'on note $p_0$. Par la remarque
 \ref{R:stat}, le type $p_0$ est stationnaire. Par la condition
 \eqref{E:cond}, il existe une fonction $0$-définissable $f$ définie
 sur $p_0^{\otimes 2}$ avec $(a\cdot b)^*=f(a^*,b^*)$.  La
 caractérisation de l'indépendance donne que l'uple $(a^*, b^*, c^*)$
 réalise $p_0^{\otimes 3}$ et

$$ f(a^*, f(b^*,c^*))=(a\cdot b\cdot c)^*= f(f(a^*,b^*),c^*).$$

La démonstration du théorème de groupes génériquement présentés de
Hrushovski \cite[Theorem 1]{eH90} adaptée aux uples infinis
(cf. \cite[Theorem 4.1]{pKaP02}) donne un groupe
$\LL_0$-$*$-défi\-nissable $H^*$ sur $N$ tel que son générique $h^*$
est $\LL_0$-interdéfinissable avec $a^*$.

De même que précédemment, il existe alors un groupe
$\LL_0$-définissable $H$ et deux génériques $h$ et $h'$ indépendants
de $H$ tel que $a \in \dcl_0(h,N)$ et $h \in \dcl_0(a^*)$ (et de même
pour $b$ relativement à $h'$, et $a\cdot b$ relativement à
$h\cdot h'$).  Si $F$ est $\LL_0$-définissablement clos, le lemme
\ref{L:Iso} entraîne que $G$ se plonge dans un sous-groupe de
$H(F)$. Si $T_0$ est la théorie d'un corps algébriquement clos de
caractéristique positive, on peut supposer comme auparavant que $H$
est un groupe algébrique défini sur $N$, tel que $h$, $h'$ et
$h\cdot h'$ sont dans $F$.

\end{proof}

\begin{remark}\label{R:Sylvain}
  Si $G$ n'est pas connexe et la théorie $T$ est stable ou
  supersimple, le même argument que celui de la fin de la
  démonstration de la proposition \ref{P:isog} entraîne l'existence
  d'un sous-groupe définissable de $G$ d'indice fini qui se plonge
  dans les points $F$-rationnels $H(F)$ du groupe $\LL_0$-définissable
  $H$. Ceci permet de plonger sans peine $G$ définissablement dans
  $H_1(F)$, pour un groupe $H_1$ aussi $\LL_0$-définissable (cf. la
  démonstration de \cite[Proposition 4.9]{eBfD02}).

  Dans un contexte qui nous semble similaire, l'existence de types
  génériques définissables permet d'étudier les groupes définissables
  dans certains corps valués enrichis \cite{sRprep}.

\end{remark}

\section{Annexe : Théorème de l'indépendance, le
  retour}\label{S:paires}

L'approche des parties précédentes permet de montrer la simplicité de
certaines théories à partir de la stabilité d'une théorie ambiante
$T_0$, en vérifiant que l'indépendance dans $T_0$ induit directement
la non-déviation dans ces nouvelles théories. Certaines théories
stables (ou simples) ne rentrent pas dans ce contexte, entre-autres
les paires propres de corps algébriquement clos ou les corps
séparablement clos (ou PAC bornés) de degré d'imperfection
infini. Dans ces exemples, la non-déviation est donnée en ajoutant une
condition à l'indépendance algébrique.  Cependant, nous allons voir
dans cette annexe que la démonstration du {\bf Théorème de
  l'indépendance} s'adapte facilement à ce cadre. De plus, l'étude des
groupes précédente s'étend ici également sans difficulté.

On utilisera les mêmes notations que précédemment pour les théories
$T$ et $T_0$. Afin de considérer les exemples ci-dessus, la théorie
$T$ est remplacée par le diagramme élémentaire d'un modèle de $T$ (et
on augmente $\LL_0$ et $\LL$ par les constantes qui énumèrent ce
modèle).

Deux $\LL$-sous-structures $A$ et $B$ d'un modèle suffisamment saturé
$F$ de $T$ telles que leur intersection $C$ est algébriquement close,
sont \emph{$\star$-indépendantes} au-dessus de $C$ si
$$ A \indi 0_C B$$ et la $\LL$-sous-structure $\sscl{A,B}$ est
parfaite et coïncide avec $\sscl{A,B}_0 \cap F$, où $\sscl{A,B}_0$
dénote la $\LL_0$-sous-structure de $F$ engendrée par $A, B$. Plus
généralement, deux parties $X$ et $Y$ de $F$ sont
\emph{$\star$-indépendantes} au-dessus d'une structure algébriquement
close $C$ si $\acl(X,C)$ et $\acl(Y,C)$ le sont.  Dans ce cas, on note
$X\indstar_C Y$.

Dans le cas des corps séparablement clos (ou plus généralement PAC
bornés) de degré d'imperfection $e$ (avec $e$ dans $\N\cup\{\infty\}$)
et munis des $\lambda$-fonctions, la propriété
$\sscl{A,B} = \sscl{A,B}_0 \cap F$ équivaut à ce que l'extension $F$
sur le sous-corps engendré par $A$ et $B$ soit séparable,
c'est-à-dire, que $F^p$ et le sous-corps $A\cdot B$ engendré par $A$
et $B$ soient linéairement disjoints sur $(A\cdot B)^p$.  En degré
d'imperfection fini, cette propriété est toujours vérifiée (car l'on a
ajout\'e au langage des symboles de constantes pour une $p$-base).  De
la même façon, pour les paires $(K,k)$ de corps algébriquement clos
dans le langage de Delon \cite{fD12}, la propriété précédente équivaut
au fait que $k$ et le corps $A\cdot B$ engendré par $A$ et $B$ soient
linéairement disjoints au-dessus de leur intersection
$k\cap (A\cdot B)$.

Pour la théorie des automorphismes génériques de corps séparablement
clos de degré d'imperfection infini~\cite{SCFA}, la
$\star$-indépendance dans le langage $\LL_{\sigma,\lambda}$ des corps
aux différences munis des fonctions $\lambda$ correspond à la
non-déviation~\cite[Corollary 3.8]{SCFA}. Ceci est également le cas
pour la modèle compagne ${\rm SCFE}_{p,e}$ des corps $K$ séparablement
clos de caractéristique $p$ et de degré d'imperfection $e$ avec un
endomorphisme $\sigma$ non surjectif, vérifiant que
$\sigma(K)^{alg}\subset K$, dans le langage $\LL_{\sigma,\lambda}$
complété par la fonction partielle $\sigma^{-1}$ et les fonctions du
langage de Delon pour la paire $(K,\sigma(K))$~\cite[Corollary
4.16]{SCFE}.

Dans ces exemples, on vérifie directement que la $\star$-indépendance
satisfait la {\bf Transitivité}, les {\bf Caractères fini et local} et
l'{\bf Extension}. Nous allons isoler des conditions, qui nous
semblent naturelles, permettant de vérifier le {\bf Théorème de
  l'indépendance} de manière unifiée. Ainsi, on retrouve le fait que
la non-déviation correspond à la description ci-dessus pour les
exemples précédents.

Puisque les $\lambda$-fonctions relatives sur une somme ou un produit
sont des expressions polynomiales sur les $\lambda$-fonctions de
chaque facteur, on vérifie facilement que les exemples ci-dessus
satisfont l'hypothèse suivante~:
\begin{hypbis}{3}\label{H:qftp_star}
Si $A\indstar_C B$ et $A'\indstar_C B$, avec $A'\models
\qftp(A/C)$, alors $$\sscl{A',B}\models \qftp(\sscl{A,B}/C).$$
\end{hypbis}

Par la description des types dans la théorie des paires propres de
corps algébriquement clos et dans les corps séparablement clos (ou
plus généralement PAC bornés) de degré d'imperfection infini,
l'hypothèse suivante est vérifiée~:

\begin{hypbis}{5}\label{H:ext_star}

  \'Etant donné un autre modèle $F'$ de $T$ contenant une
  sous-structure parfaite $C'$ au-dessus d'une sous-structure commune
  $A\subset F\cap F'$ parfaite dans $F$ et $F'$, si $\tp_0(C'/A)$ est
  stationnaire, alors il existe une sous-structure parfaite $C$ de $F$
  contenant $A$ qui est $\LL$-isomorphe à $C'$ au-dessus de $A$.
\end{hypbis}

Avec ces hypothèses, on obtient le  résultat suivant~:

\begin{theorem}
  Si la théorie $T$ satisfait les hypothèses $(\ref{H:dcl})$,
  $(\ref{H:acl})$, \textnormal{(\ref{H:qftp_star})}, $(\ref{H:full})$
  et \textnormal{(\ref{H:ext_star})} par rapport à $T_0$, alors le
  {\bf Théorème de l'indépendance} est vérifié par la
  $\star$-indépendance dès que les propriétés de {\bf Monotonie}, {\bf
    Transitivité} et {\bf Extension} sont valables pour $\indstar$
  au-dessus de parties algébriquement closes.

De plus, si $T_0$ est une théorie fortement minimale, alors $T$
satisfait le  {\bf Théorème de l'indépendance} par rapport à la
$\star$-indépendance sur tout ensemble algébriquement clos.
\end{theorem}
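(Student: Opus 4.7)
La démonstration reprend pas à pas celle du {\bf Théorème de l'indépendance} du théorème \ref{T:simple}, en remplaçant $(\ref{H:qftp})$ par \textnormal{(\ref{H:qftp_star})}, $(\ref{H:ext})$ par \textnormal{(\ref{H:ext_star})}, et en mobilisant les propriétés {\bf Monotonie}, {\bf Transitivité} et {\bf Extension} de $\indstar$ supposées acquises au-dessus des parties algébriquement closes. Fixons $N\preceq F$, ainsi que des parties algébriquement closes $A,B\supset N$ avec $A\indstar_N B$, et $C,D$ algébriquement closes avec $C\equiv_N D$, $C\indstar_N A$ et $D\indstar_N B$.

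Par {\bf Extension} puis {\bf Transitivité} pour $\indstar$, on peut supposer le triplet $A,B,D$ deux-à-deux $\star$-indépendant au-dessus de $N$. La définition même de $\indstar$ appliquée à $\acl(AB)\indstar_B\acl(BD)$ entraîne que $E=\sscl{\acl(AB),\acl(BD)}$ est parfaite et coïncide avec $\sscl{\acl(AB),\acl(BD)}_0\cap F$. Comme dans le théorème \ref{T:simple}, on conclut que $\tp_0(E/\sscl{AD})$ est stationnaire à partir de $(\ref{H:dcl})$ et de la proposition \ref{P:coher}, puisque
\[
E\cap\acl_0(AD) = \sscl{\acl(AB),\acl(BD)}\cap\acl(AD) = \sscl{AD}.
\]
Dans le cas où $T_0$ est fortement minimale, la variante de la proposition \ref{P:coher} valable au-dessus de tout ensemble algébriquement clos contenant une sous-structure élémentaire (cf.~remarque \ref{R:Thm_Indep}) permet la même conclusion, ce qui donne l'assertion finale du théorème.

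De $C\equiv_N D$ et de l'indépendance au sens de $T_0$ de $A$ avec $C$ et avec $D$ sur $N$, l'hypothèse \textnormal{(\ref{H:qftp_star})} donne un $\LL$-isomorphisme $\sigma\colon\sscl{AC}\to\sscl{AD}$ au-dessus de $NA$. Par $(\ref{H:acl})$, $\sigma$ se prolonge en un $\LL$-isomorphisme $\acl(AC)\to\acl(AD)$, puis en un $N$-isomorphisme $\bar\sigma$ entre $F$ et un autre modèle suffisamment saturé $F_1$ de $T$ identifiant $\acl(AC)$ à $\acl_0(AD)\cap F_1$. L'hypothèse \textnormal{(\ref{H:ext_star})} appliquée à l'image de $E$ par cette identification fournit une copie $\LL$-iso\-mor\-phe $E_1\subset F_1$ au-dessus de $\sscl{AD}$, et la remarque \ref{R:rel_acl} assure que les images de $\acl(AB)$ et $\acl(BD)$ dans $E_1$ sont algébriquement closes dans $F_1$. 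En particulier, la copie $B_1\subset E_1$ de $B$ vérifie $\tp_{F_1}(B_1/A)=\tp_F(B/A)$ et $\tp_{F_1}(B_1/D)=\tp_F(B/D)$; par saturation de $F$, on relève $\tp_{F_1}(B_1D/A)$ en $B'D'\subset F$ et l'on peut supposer $B'=B$, l'amalgame cherché étant alors $D'$.

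La difficulté essentielle, absente du théorème \ref{T:simple}, tient à ce que \textnormal{(\ref{H:ext_star})} ne garantit pas la $0$-indépendance auxiliaire $E_1\indi 0_{\sscl{AD}}\acl_0(AD)\cap F_1$ fournie par $(\ref{H:ext})$; cette $0$-indépendance n'est toutefois mobilisée dans la preuve originale que pour invoquer la remarque \ref{R:rel_acl}, dont la démonstration ne requiert en fait que l'$\LL$-isomorphisme entre sous-structures parfaites au-dessus de la sous-structure élémentaire commune --- ce que \textnormal{(\ref{H:ext_star})} fournit bien. Les hypothèses de {\bf Monotonie}, {\bf Transitivité} et {\bf Extension} pour $\indstar$ interviennent quant à elles lors de la réduction initiale et dans le transport final de la $\star$-indépendance $D'\indstar_N AB$, héritée de la relation $D\indstar_N AB$ le long de $\bar\sigma$ et du relèvement par saturation.
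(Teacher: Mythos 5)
Votre démonstration suit pas à pas celle du papier : même réduction initiale par \textbf{Extension} et \textbf{Transitivité}, même vérification de la stationnarité de $\tp_0(E/\sscl{AD})$ via la proposition \ref{P:coher}, même usage de (\ref{H:qftp_star}) pour construire le modèle auxiliaire $F_1$, de (\ref{H:ext_star}) pour la copie $E_1$ et de la remarque \ref{R:rel_acl} pour conclure par saturation. Votre observation que la $0$-indépendance fournie par (\ref{H:ext}) n'était pas réellement nécessaire à l'invocation de la remarque \ref{R:rel_acl} est exacte et correspond précisément à ce qui rend la version affaiblie (\ref{H:ext_star}) suffisante ici.
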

\begin{proof}

Soit donc une sous-structure  élémentaire  $N$ de
$F$,  deux parties algébriquement closes $A$ et $B$ la contenant et
deux parties algébriquement closes $C$ et
$D$ qui ont même type sur $N$ et vérifient
$$ C\indstar_N A \text{\qquad et \qquad}  D\indstar_N B.$$

\noindent Par {\bf Extension}, nous pouvons supposer que $\acl(D,B)$
est $\star$-indépendant de $\acl(A,B)$ sur $B$. \noindent Par
monotonie et transitivité, les sous-structures algébriquement closes
$A$, $B$ et $D$ sont $\star$-indépendantes sur $N$ (dans le sens
précédent). En particulier,

$$ \acl(AB) \indi 0_B \acl(BD),$$

\noindent et la sous-structure $E$ engendrée par $\acl(AB)$ et
$\acl(BD)$ est parfaite et coïncide avec la $\LL_0$-sous-structure
engendrée par $\acl(AB)$ et $\acl(BD)$.

On vérifie comme auparavant que $\tp_0(E/\sscl{AD})$ est
stationnaire, par l'hypothèse $(\ref{H:dcl})$ et la démonstration de
la proposition \ref{P:coher}, puisque $$\acl(A,D)=\acl_0(A,D)\cap F\, ,\,
\acl(B,D)=\acl_0(B,D)\cap F \text{ et } \acl(A,B)=\acl_0(A,B)\cap F.$$

Puisque

$$ A \indstar_N C \text{\qquad et
  \qquad} A \indstar_N D,$$ alors
$\sscl{AD}\models \qftp(\sscl{AC}/N)$, par l'hypothèse
\textnormal{(\ref{H:qftp_star})}.  Cet $\LL$-isomorphisme partiel
induit un $N$-isomorphisme entre $F$ et un autre modèle suffisamment
saturé $F_1$ de $T$ tel que $\acl(AC)$ est $\LL$-isomorphe avec
$\acl_0(AD)\cap F_1$, avec $\tp_{F_1}(D/A) = \tp_F(C/A)$.

Ainsi, à l'intérieur du modèle suffisamment saturé $F_1$, on
obtient grâce à l'hypothèse \textnormal{(\ref{H:ext_star})} une copie $E_1$
de $E$ sur $\sscl{AD}$. Par la remarque
\ref{R:rel_acl}, les copies de $\acl(AB)$ et  $\acl(BD)$ dans $E_1$
sont algébriquement closes dans $F_1$. Ainsi, la copie $B_1$ de $B$
correspondante  vérifie~:
$$ \tp_{F}(B/D) = \tp_{F_1}(B_1/D) \text{ et }  \tp_{F}(B/A) =
\tp_{F_1}(B_1/A). $$

\noindent Par saturation, il existe $B'D'$ dans $F$ tels que
$\tp_{F}(B'D'/A) = \tp_{F_1}(B_1 D/A)$.  Puisque
$\tp_F(B'/A) = \tp_{F_1}(B_1/A)= \tp_{F}(B/A)$, on peut supposer que
$B=B'$, ce qui permet de conclure.
\end{proof}


\begin{thebibliography}{99}

\bibitem{jA68} J. Ax, \emph{The elementary theory of finite fields},
Ann. of Math. {\bf 88}, (1968), 239--271.

\bibitem{BS} John T. Baldwin, Saharon Shelah, \emph{Model companions
    of $T{_{\rm Aut}}$ for stable $T$}, Notre Dame J. Formal Logic 42
  (2001), no. 3, 129 -- 142 (2003).


\bibitem{BTW04} I. Ben-Yaacov, I. Tomasic,  F. O. Wagner,
\emph{Constructing an almost hyperdefinable
group}, J. Math. Log. {\bf 4}, (2004), 181--212.

\bibitem{oBzC16} \"O. Beyarslan, Z. Chatzidakis, \emph{Geometric
representation in the theory of pseudo-finite fields}, J. Symbolic Logic
{\bf 82}, (2017), 1132--1139.

%

\bibitem{BMPW14} T. Blossier, A. Martin-Pizarro, F. O. Wagner,
\emph{À la recherche du tore perdu},  J. Symbolic Logic {\bf 81},
(2016), 1--31.

\bibitem{eB89} E. Bouscaren, \emph{Model theoretic versions of Weil's
    theorem on pregroups}, Notre Dame Math. Lectures {\bf 11}, (1989),
  177-185.

\bibitem{eBfD02} E. Bouscaren, F. Delon, \emph{Groups definable in
separably closed fields},  Trans. Amer. Math. Soc. {\bf 354}, (2002),
945--966.


\bibitem{DCFA} Ronald Bustamante Medina, Differentially closed fields
  of characteristic zero with a generic automorphism, Revista de
  Matem\'atica: Teor\'ia y Aplicaciones, vol. 14 (2007), pp. 81 --
  100.

\bibitem{eC99} E. Casanovas, \emph{The number of types in simple
    theories}, Ann. Pure Appl. Logic {\bf 98}, (1999), 69--86.

\bibitem{SCFA} Zoé Chatzidakis, Generic automorphisms of separably
closed fields, Illinois J. Math. 45 (2001), no. 3, 693 -- 733.

\bibitem{zCheHr} Z. Chatzidakis, E. Hrushovski, \emph{Model
theory of difference fields}, Trans. Am. Math. Soc. {\bf 351},
(1999), 2997--3071.

\bibitem{SCFE} Zoé Chatzidakis, Ehud Hrushovski, Model theory of
  endomorphisms of separably closed fields. J. Algebra 281 (2004),
  no. 2, 567 -- 603.

\bibitem{zChaP} Z. Chatzidakis,  A. Pillay, \emph{Generic
structures and simple theories.}, Ann. Pure Appl. Logic {\bf
95}, (1998), 71--92.

\bibitem{fD88} F. Delon, \emph{Id\'eaux et types sur les corps
s\'eparablement clos},
M\'em. Soc. Math. Fr. {\bf 33}, (1988), pp. 76.

\bibitem{fD12} F. Delon, \emph{\'{E}limination des quantificateurs
    dans les paires de corps alg\'ebriquement clos}, Confluentes
  Math., {\bf 4}, 1250003, 11pp (2012).

\bibitem{GKK13} J. Goodrick, B. Kim, A. Kolesnikov,
 \emph{Amalgamation functors and boundary properties in simple
 theories}, Israel J. Math. {\bf 193}, (2013), 169--207.

\bibitem{eH90}  E. Hrushovski, \emph{Unidimensional theories are
    superstable},  Ann. Pure Appl. Logic {\bf 50}, (1990), 117--138.

\bibitem{dH17} D. M. Hoffmann, \emph{Model theoretic dynamics in a
    more Galois fashion}, preprint, (2017),
  \url{https://arxiv.org/abs/1703.01376}

\bibitem{eH02} E. Hrushovski, \emph{Pseudo-finite fields  and related
    structures}, in {\sl Model theory and applications}, Quad.
  Mat. {\bf 11},  (2002), 151--212.

\bibitem{eH12} E. Hrushovski, \emph{Groupoids, imaginaries and
internal covers.}, Turk. J. Math. {\bf 36}, (2012), 173--198.

\bibitem{HrPi94} E. Hrushovski, A. Pillay, \emph{Groups definable in
local fields and pseudo-finite fields}, Israel  J. Math. {\bf 85},
(1994),  203--262.

\bibitem{KMP06}  B. Kim, J. Millar, T. de Piro, \emph{Constructing the
hyperdefinable group from the group configuration}, J.  Math. Logic
{\bf 6},  (2006), 121--139.

\bibitem{KP97} B. Kim, A. Pillay, \emph{Simple theories}, Ann. Pure Appl.
Logic {\bf 88}, (1997), {149--164}.

\bibitem{pKaP02} P. Kowalski, A. Pillay, \emph{A note on groups
definable  in difference fields},  Proc. Amer. Math. Soc. {\bf 130},
(2002), 205--212.

\bibitem{McG00} T. McGrail, \emph{The model theory of differential
fields  with finitely many commuting derivations}, J. Symbolic Logic {\bf
65},  (2000), 885--913.

\bibitem{mM94} M.  Messmer, \emph{Groups and fields interpretable in
  separably closed fields},  Trans. Amer. Math. Soc. {\bf 344},
(1994), 361 -- 377.

\bibitem{MS14} R. Moosa, T. Scanlon, \emph{Model theory of fields
with free operators in characteristic zero}, J. Math. Logic {\bf 14},
(2014), 1450009 (43 pages).

\bibitem{pNeu76} P. M. Neumann, \emph{The structure of finitary
permutation groups}, Arch. Math. {\bf 27}, (1976), 3--17.

\bibitem{aP97} A.  Pillay,  \emph{Some foundational questions
concerning  differential algebraic groups},  Pacific J. Math. {\bf 179},
(1997), 179--200.

\bibitem{aP98} A.  Pillay,  \emph{Differential Galois theory {I}},
Illinois J. Math. {\bf 42}, (1998), 678--699.

\bibitem{PP06} A. Pillay, D. Polkowska, \emph{On PAC and bounded
substructures of a stable structure}, J. Symbolic Logic {\bf
71},  (2006), 460--472.

\bibitem{PSW98} A. Pillay, T. Scanlon, F.O. Wagner,
      \emph{Supersimple fields and division rings}, Math. Res. Letters
    {\bf 5}, (1998), 473--483.

\bibitem{dP07} N. M. Polkowska, \emph{On simplicity of bounded
    pseudo\-algebraically closed structures}, J. Math. Logic {\bf 7},
  (2007),  173--193.

\bibitem{sRprep} S. Rideau, \emph{Groups with definable generics},
preprint, (2016), \url{https://arxiv.org/abs/1612.02165}

\bibitem{aW55}  A. Weil, \emph{On algebraic groups of
transformations},  Amer. J. Math. {\bf 77}, (1995), 355--391.

\bibitem {mZ04} M.  Ziegler, \emph{A Note on Generic Types}, preprint,
  (2004), \url{
    http://home.mathematik.uni-freiburg.de/ziegler/preprints/lemma_fuer_lascar.
    pdf}



\end{thebibliography}
\end{document}